\title[$(G,\ast)$-identities of $\mathrm{UT}_n$]{On star-homogeneous-graded polynomial identities of upper triangular matrices over an arbitrary field}
\author{Thiago Castilho de Mello}
\address{Instituto de Ciência e Tecnologia, Universidade Federal de S\~ao Paulo, SP, Brazil}
\email{tcmello@unifesp.br}
\author{Felipe Yukihide Yasumura}
\address{Department of Mathematics, Instituto de Matem\'atica e Estat\'istica, Universidade de S\~ao Paulo, SP, Brazil}
\email{fyyasumura@ime.usp.br}
\thanks{This work is supported by S\~ao Paulo Research Foundation (FAPESP), grants 2018/23690-6 and 2023/03922-8, and by CNPq, grants 405779/2023-2 and 404851/2021-5.}
\newtheorem{Thm}{Theorem}
\newtheorem{Lemma}[Thm]{Lemma}
\newtheorem{Cor}[Thm]{Corollary}
\theoremstyle{definition}
\newtheorem{Def}[Thm]{Definition}
\theoremstyle{remark}
\newtheorem*{Remark}{Remark}
\begin{document}
\begin{abstract}
We study the graded polynomial identities with a homogeneous involution on the algebra of upper triangular matrices endowed with a fine group grading. We compute their polynomial identities and a basis of the relatively free algebra, considering an arbitrary base field. We obtain the asymptotic behaviour of the codimension sequence when the characteristic of the base field is zero. As a consequence, we compute the exponent and the second exponent of the same algebra endowed with any group grading and any homogeneous involution.
\end{abstract}
\maketitle

\section{Introduction}
This paper concerns a family of graded algebras endowed with a compatible involution, their polynomial identities and codimension sequence. Namely, we are interested in the algebra of upper triangular matrices over a field $\mathbb{F}$, denoted by $\mathrm{UT}_n$. Their classical polynomial identities and related invariants are known (see, for instance, \cite{GiZabook}).

The study of graded polynomial identities appeared as a way to break the algebra into smaller pieces, making it easier to study its polynomial identities, in some sense. Several applications and examples appeared in this context, but the theory of graded polynomial identities much developed in recent years, and now it has acquired its importance on its own. In the context of the upper triangular matrix algebras, the group gradings are known \cite{VaZa2007}, as well as their graded polynomial identities \cite{VinKoVa2004,GRiva}, over an arbitrary field. Concerning a non-associative structure on the same vector space, it is interesting to highlight the difficulty to characterize their graded polynomial identities and related invariants, as studied in \cite{CMa,CMaS,CorK,DimasSa,GSK,KMa,PdrManu,Y23}.

The involutions of the first kind on $\mathrm{UT}_n$ were classified in \cite{VKS}. In the same paper, the authors classify the $\ast$-polynomial identities when $n\le3$ and the base field is infinite. Graded involutions are dealt in \cite{VaZa2007b} (see \cite{FDGY} as well). There are other works dedicated to study $\ast$-polynomial identities on $\mathrm{UT}_n$, for instance, \cite{UruG}, where the $\ast$-polynomial identities of $\mathrm{UT}_2$ is classified when the base field is finite. Also, the involutions of the second kind on $\mathrm{UT}_n$ were classified in \cite{UrureCouto}.

Compatibility of a grading and an involution is important and appeared in the classification of isomorphism classes of group gradings in important family of algebras, see for instance, \cite{BSZ, EK2013}. Several recent papers concern the context of (abelian) graded polynomial identities with a graded involution, for instance, \cite{BSOV21,CISV21,FSV16,GIM19,GSV16,GSV20,I20,IM17,ISSV,OSV23,OSV,OV21}.

On the other hand, it is natural to consider involutions that inverts the degrees. For instance, see \cite{Haz} and references therein. Graded polynomial identities with a degree-inverting involution are natural as well if we want to study these problems in the context of a non-abelian group. Concerning this case, the papers \cite{HN,MFb} study polynomial identities, while \cite{MF,FSY} classify degree-inverting involutions on graded-simple algebras.

Finally, it is natural to consider the following compatibility relation of an involution and a grading. We say that an involution is homogeneous if it sends a homogeneous component onto a homogeneous component. It was first considered in \cite{Mello}. The paper \cite{Y} concerns the study of graded polynomial identities where the algebra has a homogeneous involution.

In this paper, we consider the algebra $\mathrm{UT}_n$ endowed with a kind of a Universal grading (the unique fine grading of this algebra) and a homogeneous involution $\ast$. We compute the $(G,\ast)$-polynomial identities and a basis of its relatively free $G$-graded algebra with involution (\Cref{polid}). Then, we compute the asymptotic behaviour of the codimension sequence (\Cref{codimThm}), and in particular, we derive its exponent. As a consequence, for any group grading and homogeneous involution on $\mathrm{UT}_n$, we obtain the exponent and the second exponent of the algebra (\Cref{expsecexp}). As mentioned before, computing the $\ast$-polynomial identities of $\mathrm{UT}_n$ is a hard problem. However, considering a specific grading, where $\ast$ is homogeneous, the problem of studying the identities with involution becomes feasible. It is worth mentioning that in \cite{DiogoGaldino} the authors investigate the upper triangular matrix algebra endowed with a fine grading in the context of graded involutions. They compute the $\ast$-graded polynomial identities and asymptotic behaviour of the codimension sequence. Their results are independent of the results of the present paper since the context is distinct.

\section{Preliminaries}
\subsection{Graded algebra} Let $G$ be any group. We use the multiplicative notation for $G$ and denote its neutral element by $1$. We say that an algebra $\mathcal{A}$ is $G$-graded if there exists a vector-space decomposition $\mathcal{A}=\bigoplus_{g\in G}\mathcal{A}_g$ such that $\mathcal{A}_g\mathcal{A}_h\subseteq\mathcal{A}_{gh}$, for all $g,h\in G$. The choice of the decomposition is called a \emph{$G$-grading}, and we are going to denote it by $\Gamma$. The subspace $\mathcal{A}_g$ is called \emph{homogeneous component of degree $g$}. A nonzero element $x\in\mathcal{A}_g$ is called a homogeneous element of degree $g$ and we denote $\deg_\Gamma x=g$. The \emph{support} of the grading $\Gamma$ is $\mathrm{Supp}\,\Gamma=\{g\in G\mid\mathcal{A}_g\ne0\}$.

Finally, we provide a precise definition of the following:
\begin{Def}
Let $\mathcal{A}=\bigoplus_{g\in G}\mathcal{A}_g$ be a $G$-graded algebra, and let $\psi:G\to G$ be a map. An involution $\ast$ on $\mathcal{A}$ is a \emph{homogeneous involution} with respect to $\psi$ or a $\psi$-involution if $\mathcal{A}_g^\ast\subseteq\mathcal{A}_{\psi(g)}$, for all $g\in G$.
\end{Def}

We are specially interested in the case where the map $\psi$ is an anti-automorphism of order (at most) $2$ of the grading group. We shall usually denote the involution on $G$ by $\ast$ as well. So, we may write that $\ast$ is a $\ast$-homogeneous involution on $\mathcal{A}$.
\\

\noindent\textbf{Examples.}
\begin{enumerate}
\item If $G$ is an abelian group, then every degree-preserving involution is a homogeneous involution with respect to the identity map of $G$.
\item A {degree-inverting involution} is a homogeneous involution with respect to the inversion of $G$. It is worth mentioning that the degree-inverting involution on matrix algebras and upper triangular matrices were described in \cite{MF,FSY}.
\item Consider the $C_n\times C_n$-grading on $M_n(\mathbb{C})$, given by the following. Let $\varepsilon\in\mathbb{C}$ be a primitive $n$-th root of $1$. If $(i,j)\in C_n\times C_n$, then  $\mathcal{A}_{(i,j)}=\mathrm{Span}\{X^iY^j\}$, where
$$
X=\left( \begin{array}{cccc}
\varepsilon^{n-1}	& 0 & \cdots & 0 \\ 
0	& \varepsilon^{n-2} & \cdots & 0 \\ 
\vdots	& \vdots & \ddots & \vdots \\ 
0	& 0 & \cdots & 1
\end{array}\right),\quad Y=\left( \begin{array}{ccccc}
0	& 1 & 0 &\cdots & 0 \\ 
0	& 0 & 1 & \cdots & 0 \\ 
\vdots	& \vdots & \vdots & \ddots & \vdots \\ 
0	& 0 & 0 & \cdots & 1\\
1	& 0 & 0 & \cdots & 0
\end{array}\right).
$$
It is well known that such decomposition gives a $C_n\times C_n$-grading on $M_n(\mathbb{C})$. We denote such grading by $\Gamma_\varepsilon$. It is known that, if $\Gamma_\varepsilon$ is endowed with a degree-preserving or a degree-inverting involution, then $n=2$ (\cite[Lemma 2.50]{EK2013} and \cite[Lemma 5.6]{MF}). However, it is easy to see that the usual transposition will be a homogeneous involution for $\Gamma_\varepsilon$, for any $n\in\mathbb{N}$.
\end{enumerate}

\subsection{Free graded algebra with homogeneous involution}
We shall provide a construction of the free graded algebra endowed with a homogeneous involution. This is done using a particular case of the (relatively) free universal algebra in an adequate variety (see, for instance, \cite[Chapter 1]{Razmyslov} for a general discussion, and \cite{BY,BY2} as well for a particular graded version). Let $G$ be any group, and $X^G=\bigcup_{g\in G}X^{(g)}$, where $X^{(g)}=\{x_1^{(g)},x_2^{(g)},\ldots\}$. Let $\ast:G\to G$ be an involution, that is, an anti-automorphism of order (at most) $2$. 
Let $\mathbb{F}\{X^G,\ast\}$ denote the absolutely free $G$-graded binary algebra endowed with an unary operation (also denote by $\ast$).
We define the \emph{free $G$-graded associative algebra with a homogeneous involution} with respect to $\ast$, $\mathbb{F}\langle X^G,\ast\rangle$, 
as the quotient of $\mathbb{F}\{X^G,\ast\}$ by the following polynomials
\begin{align}\label{eqvariety}
\begin{split}
&x_1^{(g_1)}(x_2^{(g_2)}x_3^{(g_3)})-(x_1^{(g_1)}x_2^{(g_2)})x_3^{(g_3)}\\%
&(x_1^{(g_1)}x_2^{(g_2)})^\ast-(x_2^{(g_2)})^\ast(x_1^{(g_1)})^\ast\\%
&((x^{(g)})^\ast)^\ast-x^{(g)}\\%
&\deg_G(x^{(g)})^\ast - (\deg_Gx^{(g)})^\ast.
\end{split}
\end{align}
The first polynomial defines associativity while the second and third indicate that $\ast$ acts as an involution in the quotient algebra. The last one defines a relation between the involution $\ast$ of the group and the unary operation $\ast$ of the algebra. 

In fact, the last is natural in the context of graded polynomial identities with an involution, but to see this we need to describe the $G$-grading in terms of the projections (see, for instance, \cite{BY}). For each $g\in G$, let $\pi_g$ denote the unary operation on a $G$-graded algebra $\mathcal{A}$ given by the projection and inclusion $\pi_g:\mathcal{A}\to\mathcal{A}$. Then, the absolutely free $G$-graded algebra $\mathbb{F}\{X^G,\ast\}$ is a quotient of the absolutely free $\Omega$-algebra, where $\Omega$ contains one binary operation and $|G|+1$ unary operations (corresponding to each projection, and the involution). The quotient is given by the relations that define the $G$-grading, that is, $\pi_g(\pi_h(x))=\delta_{gh}\pi_h(x)$ and $\pi_g(\pi_{g_1}(x)\pi_{g_2}(y))=\delta_{g,g_1g_2}\pi_{g_1}(x)\pi_{g_2}(y)$. Hence, in the language of this $\Omega$-algebra, the last equation of \eqref{eqvariety} is equivalent to
$$
(\pi_g(x))^\ast-\pi_{g^\ast}(x^\ast)=0.
$$
Using either the absolutely free $G$-graded algebra or the (relatively) free $\Omega$-algebra, the free $G$-graded algebra with a $\ast$-involution $\mathbb{F}\langle X^G,\ast\rangle$ is the quotient of $\mathbb{F}\{X^G,\ast\}$ by the identities \eqref{eqvariety}.

As discussed in \cite{MFb}, in the special case where $\ast$ is a degree-preserving involution, then we can define the new variables $x_+^{(g)}:=x^{(g)}+(x^{(g)})^\ast$ and $x_-^{(g)}=x^{(g)}-(x^{(g)})^\ast$ (the symmetric and skew symmetric variables). Then we get the classical construction of the free (graded) $\ast$-algebra. Since $\ast$ does not necessarily preserve the homogeneous degree, we cannot use such technique in our context since these variables are not necessarily homogeneous.

Given a $G$-graded algebra $(\mathcal{A},\Gamma)$ with a homogeneous involution $\ast$, we denote by $\mathrm{Id}_G(\mathcal{A},\Gamma)$ its ideal of graded polynomial identities, and by $\mathrm{Id}_{G,\ast}(\mathcal{A},\Gamma,\ast)$ the set of all of its graded polynomial identities with involution.

\subsection{Gradings on $\mathrm{UT}_n$} The algebra $\mathrm{UT}_n=\mathrm{UT}_n(\mathbb{F})$ is the set of upper triangular matrices with entries in the field $\mathbb{F}$, that is,
$$
\mathrm{UT}_n=\left\{\left(\begin{array}{ccc}a_{11}&\cdots&a_{1n}\\&\ddots&\vdots\\0&&a_{nn}\end{array}\right)\mid a_{ij}\in\mathbb{F},\,i\le j\right\}.
$$
We denote by $e_{ij}$, $i\le j$ the matrix units, that is, the matrix having entry $1$ at $(i,j)$ and $0$ elsewhere.

Let $G$ be a group and $\eta=(g_1,\ldots,g_{n-1})\in G^{n-1}$. Then $\eta$ defines a $G$-grading on $\mathrm{UT}_n$ if we set $\deg e_{i,i+1}=g_i$. This kind of grading is called \emph{elementary}, and we shall use $\eta$ as well to denote it. It turns out that every group grading is isomorphic to an elementary grading (see the main result of \cite{VaZa2007}). Moreover, \cite[Proposition 1.6]{VinKoVa2004} tells us that two $G$-gradings defined by such sequences are isomorphic if and only if they are equal.

Now, let $\nu=(g_1,\ldots,g_m)\in G^m$ be another sequence. Following \cite[Definition 2.1]{VinKoVa2004}, we say that $\nu$ is $\eta$-good if there exist strictly upper triangular matrix units $r_1$, \dots, $r_m$ such that $\deg_\eta r_i = g_i$ and $r_1\cdots r_m\ne0$. If $\nu$ is not $\eta$-good, then it is called $\eta$-bad. The $\eta$-bad sequences are related to graded polynomial identities \cite[Proposition 2.2]{VinKoVa2004}, while $\eta$-good sequences tell us the way we can multiply variables of non-trivial degree. It is worth mentioning that the graded polynomial identities of $(\mathrm{UT}_n,\eta)$ may be constructed from $\eta$-bad sequences and graded polynomial identities of the base field (see \cite{VinKoVa2004,GRiva}).

\subsection{Involutions}
The involutions of the first kind on $UT_n$ were described in \cite{VKS} over fields of characteristic not $2$. If $n$ is odd each involution on $UT_n$ is equivalent to single  involution $\tau$, which we will call the orthogonal involution. It is given by reflection along the secondary diagonal, that is, for each $i\leq j$, $e_{i,j}^\tau = e_{n-j+1,n-i+1}$. If $n = 2m$ is even, any involution on $UT_n$ is equivalent either to the orthogonal involution or to the symplectic involution. If we denote it by $s$ then it is given by $A^s = D A^\tau D^{-1}$, where 
$D = \begin{pmatrix}
    I_m & 0 \\ 0 & -I_m
\end{pmatrix}$. 

In this paper we will consider the two involutions above on $UT_n$, namely  $\ast=\tau$ or $\ast=s$, the orthogonal or the symplectic involution on $\mathrm{UT}_n$. We denote $\varepsilon_\ast=1$ if $\ast=\tau$ and $\varepsilon_\ast=-1$ if $ * = s$. Note that $e_{1n}^\ast=\varepsilon_\ast e_{1n}$.

\subsection{Notations}

We assume that $\mathbb{F}$ is an arbitrary field, finite or not, of any characteristic.

By $G=\langle\alpha_1,\ldots,\alpha_{n-1}\rangle$ we denote the free group of rank $n-1$, freely generated by $\{\alpha_1,\alpha_2,\ldots,\alpha_{n-1}\}$. The group $G$ has an involution $\ast$ if we assume that $\alpha_i^\ast=\alpha_{n-i}$, for each $i=1,2,\ldots,n-1$. We denote by $\mathbb{F}\langle X^G,\ast\rangle$ the free associative $G$-graded algebra with a $\ast$-homogeneous involution $\ast$. The variables of trivial degree will be denoted by $x_i=x_i^{(1)}$. The variables of degree $g$ will be denoted by $x_i^{(g)}$. We use $x_i^{(g)\ast'}$ to denote either $x_i^{(g)}$ or $x_i^{(g)\ast}$. We use $z=x_i^{(g)\ast'}$ to denote a variable of non-trivial degree.

Let $\eta=(\alpha_1,\ldots,\alpha_{n-1})$, and consider the $\eta$-grading on $\mathrm{UT}_n$, that is $\deg_\eta e_{i,i+1}=\alpha_i$. We shall use $\eta$ to refer to the grading as well. 

\section{Polynomial identities}
We give a list of a few polynomial identities:
\begin{Lemma}\label{basicid}
$(\mathrm{UT}_n,\eta,\ast)$ satisfies the following polynomial identities:
\newcounter{bb}
\begin{enumerate}
\renewcommand{\labelenumi}{(\roman{enumi})}
\item $x^{(g)}$, $g\notin\mathrm{Supp}\,\eta$,
\item $x^{(g)\ast}-\varepsilon_\ast x^{(g)}$, where $g=\deg_\eta e_{ij}$, $i+j=n+1$,
\item $[x_1^{(1)},x_2^{(1)}]$.
\setcounter{bb}{\arabic{enumi}}
\end{enumerate}
In addition, if $\mathbb{F}$ is finite with $q$ elements, then it satisfies:
\begin{enumerate}
\renewcommand{\labelenumi}{(\roman{enumi})}
\setcounter{enumi}{\arabic{bb}}
\item $(x^{(1)})^q-x^{(1)}$.
\end{enumerate}
\end{Lemma}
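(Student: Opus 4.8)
The plan is to read off the homogeneous components of the $\eta$-grading explicitly and then check each of (i)--(iv) by a direct substitution. First I would record that, since $G=\langle\alpha_1,\ldots,\alpha_{n-1}\rangle$ is free, for $i<j$ we have $e_{ij}=e_{i,i+1}e_{i+1,i+2}\cdots e_{j-1,j}$, hence $\deg_\eta e_{ij}=\alpha_i\alpha_{i+1}\cdots\alpha_{j-1}$, and these $\binom n2$ group elements are pairwise distinct and different from $1$. Consequently the nontrivial homogeneous components of $(\mathrm{UT}_n,\eta)$ are exactly the one-dimensional spaces $\mathbb F e_{ij}$ of degree $\alpha_i\cdots\alpha_{j-1}$ with $i<j$, while the component of trivial degree is the space of diagonal matrices.

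With this in hand, (i) is immediate: if $g\notin\mathrm{Supp}\,\eta$ then the homogeneous component of degree $g$ is zero, so $x^{(g)}$ can only be evaluated to $0$. For (iii) and (iv) I would substitute diagonal matrices $d=\mathrm{diag}(a_1,\ldots,a_n)$ and $d'=\mathrm{diag}(b_1,\ldots,b_n)$ for $x_1^{(1)}$ and $x_2^{(1)}$: then $[d,d']=0$ because diagonal matrices commute, and if $\mathbb F=\mathbb F_q$ then $d^{\,q}=\mathrm{diag}(a_1^q,\ldots,a_n^q)=d$ since every element of $\mathbb F_q$ is a root of $t^q-t$.

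The only step requiring genuine attention is (ii). Here I would fix $g=\deg_\eta e_{ij}$ with $i<j$ and $i+j=n+1$ (so $g\ne1$) and proceed in two moves. First, check that $g^\ast=g$, so that $x^{(g)\ast}-\varepsilon_\ast x^{(g)}$ is a bona fide homogeneous element of $\mathbb F\langle X^G,\ast\rangle$: from $\alpha_k^\ast=\alpha_{n-k}$ we get $g^\ast=(\alpha_i\cdots\alpha_{j-1})^\ast=\alpha_{n-j+1}\cdots\alpha_{n-i}$, and $n-j+1=i$, $n-i=j-1$ force $g^\ast=\alpha_i\cdots\alpha_{j-1}=g$. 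Second, compute $e_{ij}^\ast$: for $\ast=\tau$ one has $e_{ij}^\tau=e_{n-j+1,n-i+1}=e_{ij}$ since $i+j=n+1$; for $\ast=s$ (so $n=2m$) one has $e_{ij}^s=De_{ij}^\tau D^{-1}=De_{ij}D^{-1}=d_id_j\,e_{ij}$, where $d_k=1$ for $k\le m$ and $d_k=-1$ for $k>m$, and since $i<j$ and $i+j=2m+1$ give $i\le m<j$ this equals $-e_{ij}$. In either case $e_{ij}^\ast=\varepsilon_\ast e_{ij}$, so every element $\lambda e_{ij}$ of the homogeneous component of degree $g$ is sent by $\ast$ to $\varepsilon_\ast\lambda e_{ij}$, and the polynomial in (ii) vanishes under all graded substitutions. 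Running this last computation for all $i<j$ also reconfirms that $\tau$ and $s$ are $\ast$-homogeneous involutions on $(\mathrm{UT}_n,\eta)$.

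I do not expect a real obstacle: the whole argument is bookkeeping. The point needing the most care is keeping the two involutions both written $\ast$ apart --- the anti-automorphism of $G$ with $\alpha_i^\ast=\alpha_{n-i}$ and the map $\tau$ or $s$ on $\mathrm{UT}_n$ --- and verifying in (ii) that the group degree and the scalar $\varepsilon_\ast$ produced by the matrix involution are mutually consistent.
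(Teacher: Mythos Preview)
Your proposal is correct and follows exactly the approach indicated by the paper, which states only that ``the proof is a direct computation and it shall be omitted.'' You have simply supplied the omitted details, and they are all accurate.
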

\begin{proof}
The proof is a direct computation and it shall be omitted.
\end{proof}

If $n$ is odd then we have an extra set of polynomial identities:
\begin{Lemma}\label{extraextraid}
Let $n=2k+1$, $\deg_G z_1=\deg_\eta e_{k+1,j}$ and $\deg_G z_2=\deg_\eta e_{i,k+1}$, for some $i<k+1<j$, and $\deg_G x=1$. Then $(\mathrm{UT}_n,\eta,\ast)$ satisfies
$$
(x^\ast-x)z_1,\quad z_2(x^\ast-x).
$$
\end{Lemma}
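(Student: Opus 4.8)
The plan is the usual one for fine‑graded identities: evaluate the two polynomials on arbitrary homogeneous substitutions and use that, since $G$ is free on $\alpha_1,\dots,\alpha_{n-1}$, each word $\alpha_p\alpha_{p+1}\cdots\alpha_{q-1}$ is reduced and determines $(p,q)$ uniquely. Hence $(\mathrm{UT}_n)_{\deg_\eta e_{pq}}=\mathbb{F}e_{pq}$ when $p<q$, while $(\mathrm{UT}_n)_1=\bigoplus_{l=1}^{n}\mathbb{F}e_{ll}$ consists of diagonal matrices. In particular a homogeneous variable of degree $\deg_\eta e_{k+1,j}$ (be it a plain variable or a $\ast$-variable) can only be evaluated as $\lambda e_{k+1,j}$, and one of degree $\deg_\eta e_{i,k+1}$ only as $\mu e_{i,k+1}$, for scalars $\lambda,\mu\in\mathbb{F}$; a trivial‑degree variable $x$ is evaluated as a diagonal matrix $d=\sum_{l=1}^{n}d_le_{ll}$.

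Next I would compute $(x^\ast-x)$ on such a substitution. Since $n=2k+1$ is odd, the involution in play is the orthogonal one $\ast=\tau$, with $e_{ll}^\ast=e_{n-l+1,n-l+1}$, so $d^\ast-d=\sum_{l=1}^{n}(d_{n-l+1}-d_l)e_{ll}$. The key observation is that $l=k+1$ is the fixed point of $l\mapsto n-l+1$, so the $(k+1,k+1)$-entry of $d^\ast-d$ is $d_{k+1}-d_{k+1}=0$. Multiplying $e_{k+1,j}$ on the left by a diagonal matrix, or $e_{i,k+1}$ on the right by a diagonal matrix, extracts precisely that $(k+1,k+1)$-entry: $(d^\ast-d)(\lambda e_{k+1,j})=\lambda (d^\ast-d)_{k+1,k+1}e_{k+1,j}=0$ and $(\mu e_{i,k+1})(d^\ast-d)=\mu (d^\ast-d)_{k+1,k+1}e_{i,k+1}=0$. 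Since the substitution was arbitrary, both $(x^\ast-x)z_1$ and $z_2(x^\ast-x)$ are identities of $(\mathrm{UT}_n,\eta,\ast)$.

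There is no real obstacle: the statement is, like \Cref{basicid}, a direct computation. The only point deserving care is the bookkeeping of homogeneous evaluations — namely that freeness of $G$ pins each non‑trivial variable to a one‑dimensional component $\mathbb{F}e_{pq}$, so that $z_1$ and $z_2$ are forced onto $e_{k+1,j}$ and $e_{i,k+1}$ respectively — together with the elementary remark that the middle row/column index $k+1$ is fixed by the reflection defining $\tau$, which is exactly why the argument requires $n$ odd.
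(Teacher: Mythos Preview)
Your proof is correct and follows exactly the approach the paper indicates: it simply says ``The proof is a direct computation,'' and what you wrote is precisely that computation, hinging on the fact that for $n=2k+1$ the reflection $l\mapsto n-l+1$ fixes $k+1$, so the $(k+1,k+1)$-entry of $d^\ast-d$ vanishes.
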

\begin{proof}
The proof is a direct computation.
\end{proof}

Let $I$ be the $T_{G,\ast}$-ideal generated by the polynomials in \Cref{basicid} and, if $n$ is odd, \Cref{extraextraid} as well.

\begin{Def}\label{normalcom}
A \emph{normal monomial} is
$$
c=\omega_0 z_1\omega_1 z_2\cdots \omega_r z_r\omega_{r+1},
$$
where each $z_i$ is a variable of nontrivial degree (either $x^{(g)}$ or $x^{(g)\ast}$), the sequence $(\deg_G z_1,\ldots,\deg_G z_r)$ is $\eta$-good, and $\omega_j$ is
$$
x_{i_1}\cdots x_{i_r}x_{j_1}^\ast\cdots x_{j_s}^\ast,
$$
where $i_1\le\cdots\le i_r$, $j_1\le\cdots\le j_s$, $r, s\ge0$.
\end{Def}

\begin{Lemma}\label{basiclemma}
Modulo $I$, every element of $\mathbb{F}\langle X^G,\ast\rangle$ is a linear combination of normal monomials.
\end{Lemma}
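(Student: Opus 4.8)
The plan is to prove the lemma by a standard rewriting/reduction argument: starting from an arbitrary monomial of $\mathbb{F}\langle X^G,\ast\rangle$, I produce a finite sequence of reductions, each of which either stays inside $I$ or produces monomials that are ``closer'' to normal form in some well-founded ordering, until only normal monomials remain. First I would observe that a general monomial is an alternating product of blocks of trivial-degree letters (each letter being $x_i$ or $x_i^\ast$) and single letters of nontrivial degree (the $z$'s). So write the monomial as $w_0 z_1 w_1 z_2 \cdots z_m w_m$, where each $w_j$ is a (possibly empty) word in the trivial-degree letters and their $\ast$'s, and each $z_i$ is a nontrivial-degree variable or its $\ast$.

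The argument then proceeds in two independent stages. \emph{Stage 1: fix the trivial-degree blocks.} Using identity (iii) of \Cref{basicid}, $[x_1^{(1)},x_2^{(1)}]$, together with its consequences $[x_i, x_j]$, $[x_i,x_j^\ast]$, $[x_i^\ast,x_j^\ast]$ (all obtained by substitution — note $x_i^\ast$ still has trivial degree since $1^\ast=1$), one checks that inside each block $w_j$ any two trivial-degree letters commute modulo $I$. Hence, modulo $I$, we may sort each $w_j$ so that all un-starred letters come first in nondecreasing index order, followed by all starred letters in nondecreasing index order; this is exactly the shape of $\omega_j$ required in \Cref{normalcom}. (If $\mathbb{F}$ is finite, identity (iv) lets us additionally cap exponents, but that is not needed for the statement as phrased — it will matter for linear independence later.)

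\emph{Stage 2: fix the nontrivial-degree spine.} Now the degree sequence $(\deg_G z_1,\ldots,\deg_G z_m)$ may fail to be $\eta$-good. I claim that in that case the whole monomial lies in $I$, so it contributes nothing. Indeed, by identity (ii) of \Cref{basicid}, any $z_i=x^{(g)\ast}$ with $g=\deg_\eta e_{ij}$, $i+j=n+1$, can be replaced by $\pm x^{(g)}$; and by identity (i) any $z_i$ whose degree is not in $\mathrm{Supp}\,\eta$ kills the monomial. After these replacements every remaining $z_i$ has degree in $\mathrm{Supp}\,\eta$, and I must decide $\eta$-goodness of the resulting sequence of actual degrees. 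If it is $\eta$-bad, then by \cite[Proposition 2.2]{VinKoVa2004} (the $\eta$-bad sequences give graded identities of $(\mathrm{UT}_n,\eta)$) the monomial, read purely as a graded polynomial, is already an identity of the underlying graded algebra — but here I must be careful: a $z_i$ might still be a starred variable $x^{(g)\ast}$ with $g$ \emph{not} a secondary-diagonal degree, so identity (ii) does not directly apply. The point is that such a starred variable has degree $g^\ast=\deg_\eta e_{n-j+1,n-i+1}$, which is again in $\mathrm{Supp}\,\eta$; so as a \emph{graded} monomial (forgetting which letters are starred) the spine of degrees is a well-defined element of $G^m$, and $\eta$-badness of that spine forces the monomial into the graded $T$-ideal, hence into $I$. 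When $n$ is odd, the sequences that become $\eta$-good only after using identity (ii) — those passing through the middle index $k+1$ with a ``wrong-parity'' middle letter — are precisely handled by the extra identities of \Cref{extraextraid}, which let me normalize the trivial-degree letter adjacent to such a $z_i$; I would spell out that these are exactly the configurations $(x^\ast-x)z_1$ and $z_2(x^\ast-x)$ appearing there.

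The main obstacle, and the place where I expect to spend the most care, is Stage 2 in the odd case: disentangling the interaction between identity (ii) (which only sees secondary-diagonal degrees), the $\eta$-good/$\eta$-bad dichotomy of \cite{VinKoVa2004} (which is stated for the plain grading, without involution), and the middle-row/middle-column identities of \Cref{extraextraid}. Concretely, I need a clean bookkeeping device — e.g. assigning to each monomial the pair (multiset of "realizable" products of matrix units compatible with its starred/unstarred pattern) and showing that this is nonempty iff the monomial survives modulo $I$ — so that I can simultaneously argue that $\eta$-bad spines vanish and that $\eta$-good spines can be brought to the normal shape without reintroducing disorder in the already-sorted blocks $w_j$. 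Once that dictionary is in place, the reduction terminates because each application of (iii) strictly decreases the number of inversions in some $w_j$ while not touching the spine, and each application of (i), (ii), or \Cref{extraextraid} either annihilates the monomial or strictly decreases the number of starred nontrivial-degree letters in the spine; a lexicographic combination of these two statistics is the required well-founded measure. This yields that every element is, modulo $I$, a linear combination of normal monomials.
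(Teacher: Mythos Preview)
Your Stage~1 is exactly right and is the ``standard'' part of the argument. The issue is in Stage~2. You write that $\eta$-badness of the spine ``forces the monomial into the graded $T$-ideal, hence into $I$.'' The first clause is true (that is what \cite[Proposition~2.2]{VinKoVa2004} gives), but the ``hence into $I$'' is unjustified: you are implicitly assuming $\mathrm{Id}_G(\mathrm{UT}_n,\eta)\subseteq I$, which is essentially a form of the theorem you are trying to prove. Nothing you have written explains why a graded identity of $\mathrm{UT}_n$ must already be a consequence of the short list (i)--(iv).

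The correct (and much shorter) argument uses the special feature of this grading that you never invoke: $G$ is free and $\mathrm{Supp}\,\eta$ consists exactly of the consecutive positive words $\alpha_i\alpha_{i+1}\cdots\alpha_j$. Hence a sequence $(g_1,\ldots,g_m)$ of nontrivial support elements is $\eta$-good if and only if the product $g_1\cdots g_m$ lies in $\mathrm{Supp}\,\eta$. So if the spine is $\eta$-bad, the whole monomial $\omega_0 z_1\omega_1\cdots z_m\omega_m$ is homogeneous of a degree $g\notin\mathrm{Supp}\,\eta$; now substitute it for $x^{(g)}$ in identity~(i) and use that $I$ is a $T_{G,\ast}$-ideal to conclude the monomial lies in $I$. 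No appeal to \cite{VinKoVa2004} is needed.

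Finally, your detours through identity~(ii) and the extra identities of \Cref{extraextraid} are out of place here. \Cref{normalcom} allows each $z_i$ to carry a $\ast$ and places no restriction on the $\omega_j$ beyond the ordering, so neither (ii) nor the odd-$n$ identities are required for \Cref{basiclemma}; they enter only later, in \Cref{lem4}. Once you drop those digressions and replace the circular step by the support argument above, your proof is complete and coincides with what the paper calls ``standard.''
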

\begin{proof}
The proof is standard.
\end{proof}

When the base field is finite, using the last polynomial identity, we can say something more on the normal monomials:
\begin{Lemma}\label{finitecase}
If $\mathbb{F}$ is finite having $q$ elements, then in each of the normal monomials of \Cref{basiclemma}, we may assume that for each $k$ and $l$,  $x_k$ and $x_l^\ast$ appears at most $q-1$ times each in each $\omega_j$.
\end{Lemma}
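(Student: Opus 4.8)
The plan is to carry the reduction of \Cref{basiclemma} one step further, using identity (iv) of \Cref{basicid} to bound the multiplicity of each variable inside a block $\omega_j$.

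First I would record the shape of a block in a normal monomial: the sorting conditions $i_1\le\dots\le i_r$ and $j_1\le\dots\le j_s$ in \Cref{normalcom} put equal variables next to each other, so
$$
\omega_j=x_{i_1}^{a_1}\cdots x_{i_p}^{a_p}\,(x_{k_1}^{\ast})^{b_1}\cdots(x_{k_t}^{\ast})^{b_t},
$$
with $i_1<\dots<i_p$, $k_1<\dots<k_t$ and all exponents $\ge1$.

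Next I would observe that $I$ contains $y^q-y$ for each trivial-degree variable $y$ of either kind. For $y=x_k$ this is the substitution instance $x^{(1)}\mapsto x_k$ of identity (iv). For $y=x_l^{\ast}$, note that $\ast$, being an anti-automorphism of order at most $2$ of $G$, fixes $1$, so $x_l^{\ast}$ is again homogeneous of trivial degree; and since $I$ is a $T_{G,\ast}$-ideal it is stable under $\ast$, whence $(x_l^{\ast})^q-x_l^{\ast}=(x_l^q-x_l)^{\ast}\in I$. Therefore $y^q\equiv y\pmod I$, equivalently $y^m\equiv y^{m-(q-1)}\pmod I$ for every $m\ge q$.

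Finally, for each block $\omega_j$ and each factor $y^m$ appearing in it with $m\ge q$, I would repeatedly replace $m$ by $m-(q-1)$ (using that $I$ is a two-sided ideal) until $1\le m\le q-1$. This only modifies exponents of trivial-degree variables inside the $\omega_j$'s; the variables $z_i$ and the sequence $(\deg_G z_1,\dots,\deg_G z_r)$ are untouched, so the monomial obtained is again normal in the sense of \Cref{normalcom}. Together with \Cref{basiclemma} this gives the statement. There is no real obstacle; the only point deserving attention is having $y^q\equiv y$ available also for $y=x_l^{\ast}$, which is why one uses that $\ast$ fixes the trivial degree and that $I$ is closed under $\ast$.
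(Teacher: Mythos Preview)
Your proposal is correct and follows exactly the paper's approach: the paper's proof is the single line ``It follows from identity (iv) of \Cref{basicid}'', and you have simply unpacked that reference, including the small remark that $(x_l^\ast)^q-x_l^\ast\in I$ because $I$ is a $T_{G,\ast}$-ideal.
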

\begin{proof}
It follows from identity (iv) of \Cref{basicid}.
\end{proof}

Now, we shall prove that some of the $\ast$ may not appear in some situation.

\begin{Def}
We say that $g\in G$ is \emph{symmetric} if $g=\deg_\eta e_{ij}$, for some $i<j$ such that $i+j=n+1$. Given a sequence $(g_1,\ldots,g_m)$ of elements of $G$, we say that it has a \emph{symmetric subsequence} if there is $i<j$ such that $g_ig_{i+1}\cdots g_j$ is symmetric. In this case, the sequence $(g_i,g_{i+1},\ldots,g_j)$ is called a symmetric subsequence. A variable $z$ is called \emph{symmetric} if $\deg_\eta z$ is symmetric; and a monomial $z_1\cdots z_m$ is symmetric if $\deg_\eta z_1\cdots z_m$ is symmetric.
\end{Def}
\begin{Remark}
Recall that, if $g\in G$ is symmetric, then $x^{(g)\ast}-\varepsilon_\ast x^{(g)}\in I$.
\end{Remark}

\begin{Lemma}\label{extraid}
If $z$ is a symmetric variable, then, modulo $I$,
$$
zx=x^\ast z.
$$
\end{Lemma}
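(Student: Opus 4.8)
The plan is to leverage the known identity (ii) of \Cref{basicid}, namely $x^{(g)\ast}-\varepsilon_\ast x^{(g)}\in I$ whenever $g$ is symmetric, and evaluate both sides of the claimed relation on $\mathrm{UT}_n$ to see which extra identities (from \Cref{basicid} and \Cref{extraextraid}) force the equality $zx=x^\ast z$ modulo $I$. More precisely, I would argue it suffices to show $(\mathrm{UT}_n,\eta,\ast)$ satisfies $zx-x^\ast z$ for $z$ symmetric, and then observe that this polynomial actually lies in the $T_{G,\ast}$-ideal $I$ because it can be rewritten using identities already listed. Write $z=x^{(g)\ast'}$ with $g=\deg_\eta e_{ij}$ and $i+j=n+1$. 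The key case distinction is whether $z$ is a ``diagonal-symmetric'' variable with $i<k+1<j$ forcing the middle index $k+1$ to appear (the case $n=2k+1$ odd, handled by \Cref{extraextraid}) or whether the symmetric block avoids the center.

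First I would reduce to the untwisted variable: since $g$ is symmetric, $z\equiv\varepsilon_\ast x^{(g)}$ or $z=x^{(g)}$ modulo $I$ (depending on whether $z=x^{(g)\ast}$ or $z=x^{(g)}$), so it is enough to treat $z=x^{(g)}$ and prove $x^{(g)}x-x^\ast x^{(g)}\in I$. Next I would evaluate on matrix units: a homogeneous element of degree $g=\deg_\eta e_{ij}$ with $i+j=n+1$ is a scalar multiple of $e_{ij}$, and $x$ of trivial degree evaluates (modulo identity (iii), commutativity of the trivial component, and in the finite case (iv)) to a diagonal matrix $\sum_t \lambda_t e_{tt}$. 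Then $e_{ij}x$ picks out $\lambda_j e_{ij}$ while $x^\ast e_{ij}$ picks out $(\lambda_{n+1-i}) e_{ij}=\lambda_j e_{ij}$ using that the involution $\tau$ (resp.\ $s$) sends $e_{tt}$ to $e_{n+1-t,n+1-t}$ and that $j=n+1-i$. So the two sides agree on every evaluation, i.e.\ $zx-x^\ast z\in\mathrm{Id}_{G,\ast}(\mathrm{UT}_n,\eta,\ast)$.

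To upgrade this from ``is an identity'' to ``lies in $I$'', I would show the polynomial $x^{(g)}x-x^\ast x^{(g)}$ is a consequence of the listed generators. Using (ii) to replace $x^{(g)}$ by $\varepsilon_\ast x^{(g)\ast}$ inside a normal-monomial rewriting, and using (iii) to split $x=\tfrac12(x+x^\ast)+\tfrac12(x-x^\ast)$ when $\mathrm{char}\,\mathbb{F}\neq2$ (the symmetric part commutes past everything in the trivial component, the skew part is killed against a symmetric variable by \Cref{extraextraid} when $n$ is odd, and directly otherwise), one expresses $x^{(g)}x-x^\ast x^{(g)}$ as an $I$-combination. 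In characteristic $2$ the split is unavailable, so there I would instead run the argument purely through \Cref{basiclemma}: both $zx$ and $x^\ast z$ reduce to normal monomials, and comparing their normal forms (they differ only in whether the trivial-degree variable is starred) shows the difference is already reduced to $0$ modulo $I$ once (ii) and, for $n$ odd, \Cref{extraextraid} are applied.

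The main obstacle I anticipate is the characteristic-$2$ case and, relatedly, making the passage from ``identity of the algebra'' to ``element of $I$'' genuinely formal rather than relying on the evaluation argument: one must check that no ``new'' identity beyond those in \Cref{basicid} and \Cref{extraextraid} is secretly being used. Concretely, the delicate point is controlling the interaction between a symmetric variable $z$ straddling the center (the configuration of \Cref{extraextraid}) and a trivial-degree variable; here the factorization $z=z_2z_1'$ with $\deg z_2=\deg_\eta e_{i,k+1}$, $\deg z_1'=\deg_\eta e_{k+1,j}$ lets \Cref{extraextraid} apply to absorb the skew part $x^\ast-x$ on the correct side, and I would need to verify this factorization is compatible with the $\eta$-good condition so that \Cref{basiclemma}'s normal form is respected throughout.
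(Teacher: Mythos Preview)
Your approach has a genuine gap and misses a much shorter argument. The paper's proof is one line: since $I$ is a $T_{G,\ast}$-ideal and $zx$ is homogeneous of the symmetric degree $g=\deg_Gz$, identity (ii) applies to the \emph{product} $zx$ by substitution, giving $(zx)^\ast\equiv\varepsilon_\ast\,zx$. Then
\[
zx\equiv\varepsilon_\ast(zx)^\ast=\varepsilon_\ast\,x^\ast z^\ast\equiv\varepsilon_\ast^2\,x^\ast z=x^\ast z\pmod I,
\]
using the involution law $(zx)^\ast=x^\ast z^\ast$ and identity (ii) once more on $z^\ast$. No case analysis on parity of $n$, no characteristic hypothesis, and no appeal to \Cref{extraextraid} is needed.

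Your argument, by contrast, tries to split $x=\tfrac12(x+x^\ast)+\tfrac12(x-x^\ast)$ and handle the two pieces separately, but neither piece is actually controlled by the identities you cite. The claim that ``the symmetric part commutes past everything in the trivial component'' only uses (iii), which gives commutativity \emph{within} the neutral component; it says nothing about commuting a trivial-degree element past $z$, which has nontrivial degree. For the skew part, \Cref{extraextraid} applies only when $\deg_Gz=\deg_\eta e_{k+1,j}$ or $\deg_\eta e_{j,k+1}$, and a symmetric variable never has this form (if $i+j=n+1=2k+2$ with $i<j$ then neither index equals $k+1$); your proposed ``factorization $z=z_2z_1'$'' is not available because $z$ is a single variable in the free algebra, not a product. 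Finally, for $n$ even you offer no mechanism at all (``directly otherwise'') to kill the skew contribution, and the characteristic-$2$ fallback via normal forms is circular: you would need to already know the normal form of $x^\ast z$ coincides with that of $zx$, which is precisely the lemma. The missing idea is simply that (ii), being a generator of a $T_{G,\ast}$-ideal, may be evaluated on any homogeneous element of symmetric degree, not just on a bare variable.
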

\begin{proof}
One has
$$
zx=\varepsilon_\ast(zx)^\ast=\varepsilon_\ast x^\ast z^\ast=x^\ast z.
$$
\end{proof}

Combining the previous results, we find restrictions on the normal monomials.
\begin{Lemma}\label{lem4}
Modulo $I$, every element of $\mathbb{F}\langle X^G,\ast\rangle$ is a linear combination of normal monomials $c=\omega_0z_1\omega_1\cdots\omega_rz_r\omega_{r+1}$, where:
\begin{enumerate}
\item if $(\deg_Gz_a,\ldots,\deg_G z_b)$ is a symmetric subsequence, then no $\ast$ appears in $\omega_{a-1}$ and $\omega_{b+1}$,
\item if $n=2k+1$, and for some $i$ and $j$, it holds that $\deg_G z_i=\deg_\eta e_{k+1,j}$, then there is no $\ast$ in $\omega_{i-1}$,
\item if $n=2k+1$, and for some $i$ and $j$, it holds that $\deg_G z_i=\deg_\eta e_{j,k+1}$, then there is no $\ast$ in $\omega_{i}$.
\end{enumerate}
In addition, if $\mathbb{F}$ is finite having $q$ elements, then for each $j$, the number of $x_j$ and the number of $x_j^\ast$ appearing in each $\omega_i$ are at most $q-1$.
\end{Lemma}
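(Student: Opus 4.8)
The plan is to start from the normal-form description of \Cref{basiclemma} and rewrite each normal monomial, modulo $I$, as a linear combination of normal monomials fulfilling (1)--(3) (and, over a finite field, the stated multiplicity bound), via a terminating sequence of elementary rewritings; conditions (1)--(3) will be exactly the condition that ``no further rewriting applies''.

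The first and only non-routine step is a \emph{block absorption} lemma distilled from identity (ii) of \Cref{basicid}. Since $I$ is a $T_{G,\ast}$-ideal containing $x^{(g)\ast}-\varepsilon_\ast x^{(g)}$ for every symmetric $g$, and since $I$ is closed under all graded $\ast$-substitutions of $\mathbb{F}\langle X^G,\ast\rangle$, I would substitute an arbitrary homogeneous element $w$ of symmetric $\eta$-degree for $x^{(g)}$ to obtain
$$
w^\ast\equiv\varepsilon_\ast w\pmod I .
$$
Given a symmetric subsequence $(\deg_G z_a,\dots,\deg_G z_b)$ of a normal monomial $c$, I would set $B=z_a\omega_az_{a+1}\cdots\omega_{b-1}z_b$; since the $\omega_j$'s have trivial degree, $\deg_\eta B=\deg_Gz_a\cdots\deg_Gz_b$ is symmetric, and so are $\deg_\eta(Bu)$ and $\deg_\eta(uB)$ for any degree-$1$ element $u$ (a product of variables $x_k$ and $x_k^\ast$). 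Applying the displayed relation to $B$, to $Bu$ and to $uB$ then yields the absorption identities
$$
u^\ast B\equiv Bu,\qquad Bu^\ast\equiv uB\pmod I ,
$$
which for a single symmetric variable $B$ is exactly \Cref{extraid}.

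With these in hand, part (1) is a matter of transporting $\ast$'s across $B$: writing the $\omega$-factor immediately to the left of $z_a$ as $v\,x_{j_1}^\ast\cdots x_{j_s}^\ast$ with $v$ free of $\ast$, I would apply $u^\ast B'\equiv B'u$ with $u=x_{j_s},x_{j_{s-1}},\dots$ and $B'=B,\ Bx_{j_s},\ Bx_{j_s}x_{j_{s-1}},\dots$ (each of symmetric degree), moving the $x_{j_t}^\ast$ one at a time to the right of $B$, where the $\ast$ is dropped; after reordering the resulting degree-$1$ block into the form of \Cref{normalcom} (commutativity of degree-$1$ variables, identity (iii) of \Cref{basicid}), the $\omega$-factor to the left of $z_a$ is free of $\ast$. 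Commuting the $\ast$-variables of the $\omega$-factor immediately to the right of $z_b$ so that they become adjacent to $B$ and then using $B'u^\ast\equiv uB'$ clears that one as well. Parts (2) and (3) are the same but simpler, with $B$ replaced by a single variable $z_i$: for (2), if $\deg_Gz_i=\deg_\eta e_{k+1,j}$, then $x^\ast z_i\equiv xz_i$ by \Cref{extraextraid}, and since each $x_{j_t}\cdots x_{j_s}z_i$ still has that degree, the $\ast$'s can be peeled off the $\omega$-factor to the left of $z_i$ one by one; (3) is the mirror statement using $z_2(x^\ast-x)$.

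Finally I would check termination. A rewriting step is an application of one of the absorption identities, of \Cref{extraextraid}, or (over a finite field) of identity (iv) of \Cref{basicid} and its $\ast$-image $(x_k^{(1)\ast})^q\equiv x_k^{(1)\ast}$, followed by reordering the $\omega$-factors into the form of \Cref{normalcom}; each such step sends a normal monomial to a single normal monomial modulo $I$, never creates a $\ast$-symbol, and in the last case strictly shortens the monomial. Ordering normal monomials by the pair (number of $\ast$-decorated variables occurring in their $\omega$-factors, total length) with the lexicographic order, every rewriting step strictly decreases this well-founded quantity; hence the process terminates, and a normal monomial admitting no rewriting step necessarily satisfies (1)--(3) and, over a finite field, the multiplicity bound. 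The main obstacle is really just the absorption identity $u^\ast B\equiv Bu$ — that is, the observation that a homogeneous element of symmetric degree is fixed by $\ast$ up to the sign $\varepsilon_\ast$; the rest is bookkeeping together with repeated use of \Cref{extraextraid}, identity (iii), and commutativity of degree-$1$ elements.
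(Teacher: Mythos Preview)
Your proposal is correct and follows the same route as the paper, whose own proof is the single sentence ``The proof is a direct consequence of the previous lemmas.'' You have simply written out what that sentence means: the block absorption identity $u^\ast B\equiv Bu$ (and its mirror) is precisely the $T_{G,\ast}$-ideal consequence of identity~(ii) that generalizes \Cref{extraid} from a single symmetric variable to any homogeneous element of symmetric degree, and together with \Cref{extraextraid}, identity~(iii), and identity~(iv) it yields (1)--(3) and the finite-field bound by the rewriting/termination argument you give.
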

\begin{proof}
The proof is a direct consequence of the previous lemmas.
\end{proof}

Now, we shall investigate the distinct elements of the relatively free algebra that are the product of variables of non-trivial degree. We prove that two monomials containing only non-trivial degree variables are equal (up to a scalar) if and only if one of them is obtained from the other by successive applications of $\ast$ in symmetric subsequence of the variables.
\begin{Lemma}\label{lem5}
Let $z_1=x_1^{(g_1)\ast'}$, \dots, $z_m=x_m^{(g_m)\ast'}$, where $g_i\ne1$, for all $i=1,2,\ldots,m$, where $\ast'$ means either $\ast$ or nothing. Let $z=z_1\cdots z_m$ and $z'=z_{\sigma(1)}^{\ast'}\cdots z_{\sigma(m)}^{\ast'}$, for some $\sigma\in\mathcal{S}_m$. Then, $z=\pm z'$ modulo $\mathrm{Id}_{G,\ast}(\mathrm{UT}_n,\eta,\ast)$ if and only if $z=\pm z'$ modulo $I$.
\end{Lemma}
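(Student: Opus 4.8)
The plan is to prove the two implications separately, the backward one being immediate and the forward one resting on a combinatorial analysis of monomials in variables of nontrivial degree. The backward implication is trivial: the generators of $I$ listed in \Cref{basicid} and \Cref{extraextraid} are genuine polynomial identities of $(\mathrm{UT}_n,\eta,\ast)$, and $\mathrm{Id}_{G,\ast}(\mathrm{UT}_n,\eta,\ast)$ is a $T_{G,\ast}$-ideal, so $I\subseteq\mathrm{Id}_{G,\ast}(\mathrm{UT}_n,\eta,\ast)$, and $z=\pm z'$ modulo $I$ forces $z=\pm z'$ modulo $\mathrm{Id}_{G,\ast}(\mathrm{UT}_n,\eta,\ast)$. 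For the converse, assume $z=\pm z'$ modulo $\mathrm{Id}_{G,\ast}(\mathrm{UT}_n,\eta,\ast)$. First I would record that a monomial in variables of nontrivial degree is a graded identity of $(\mathrm{UT}_n,\eta)$ exactly when its sequence of degrees is $\eta$-bad, and that in this case it already lies in $I$: if some variable has degree outside $\mathrm{Supp}\,\eta$, this is \Cref{basicid}(i); otherwise the total degree lies outside $\mathrm{Supp}\,\eta$, and since the reduction to normal form is $G$-homogeneous while every normal monomial has degree in $\mathrm{Supp}\,\eta$, \Cref{basiclemma} forces the monomial to be $\equiv 0$ modulo $I$. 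Hence if $z$ is an identity, so is $z'$, and both lie in $I$; so we may assume neither $z$ nor $z'$ is an identity, i.e.\ both sequences of degrees are $\eta$-good. As $\mathrm{Id}_{G,\ast}$ is $G$-graded, $\deg_Gz=\deg_Gz'$, and because both monomials are $\eta$-good this common degree equals $\deg_\eta e_{pq}$ for a single pair $p<q$ (the extreme indices obtained by evaluating at the matrix units of the correct degrees).

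It thus suffices to prove the following claim: if $z=z_1\cdots z_m$ and $z'=z'_1\cdots z'_m$ are $\eta$-good monomials that both use each of $x_1^{(g_1)},\dots,x_m^{(g_m)}$ (with $g_i\ne1$) exactly once, possibly starred, and $\deg_Gz=\deg_Gz'$, then $z'$ is obtained from $z$ by finitely many moves of the type: pick a consecutive factor $z_a\cdots z_b$ with $\deg_G(z_a\cdots z_b)$ symmetric and replace it by $\varepsilon_\ast z_b^\ast\cdots z_a^\ast$. Each such move preserves the class modulo $I$: substituting $x^{(g)}\mapsto z_a\cdots z_b$, which is legitimate since $g:=\deg_G(z_a\cdots z_b)$ is symmetric, into the identity $x^{(g)\ast}-\varepsilon_\ast x^{(g)}$ of \Cref{basicid}(ii) gives $z_b^\ast\cdots z_a^\ast-\varepsilon_\ast z_a\cdots z_b\in I$, hence $z_a\cdots z_b\equiv\varepsilon_\ast z_b^\ast\cdots z_a^\ast$ modulo $I$, which we may multiply on both sides by the remaining part of the monomial. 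So the claim yields $z=\pm z'$ modulo $I$, completing the proof.

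To prove the claim I would encode a monomial as a tiling: evaluating it at the unique matrix units of the relevant degrees gives $e_{p_1p_2}e_{p_2p_3}\cdots e_{p_mp_{m+1}}=e_{pq}$ with $p=p_1<\cdots<p_{m+1}=q$, so the intervals $[p_j,p_{j+1}]$ tile $[p,q]$, each tile labelled by a variable index. For a fixed variable $x_i^{(g_i)}$ with $g_i=\deg_\eta e_{c_id_i}$, the only admissible tiles are $[c_i,d_i]$ (variable bare) and its reflection $[\,n+1-d_i,\ n+1-c_i\,]$ (variable starred) through the centre of $\{1,\dots,n\}$; in particular, once the tile is fixed, whether the variable is bare or starred is determined (up to the sign $\varepsilon_\ast$ when $[c_i,d_i]$ is itself symmetric). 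Under this dictionary the above move is exactly ``reflect, through the centre, a sub-interval of $[p,q]$ that is symmetric and is a union of tiles''. I would then induct on $m$, comparing the leftmost tiles of the two tilings of $[p,q]$: if they carry the same variable, strip it off and recurse on the remaining interval; if not, the variable carried by the leftmost tile of, say, $z$ must occupy in $z'$ the reflection of that tile, whence $[\,p,\ n+1-p\,]$ is a symmetric sub-interval of $[p,q]$ equal to a union of tiles of $z'$, and reflecting it in $z'$ brings the leftmost tiles into agreement.

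The main obstacle is this combinatorial claim, and the delicate step in the induction is precisely the last one: one must verify that when the leftmost tiles disagree the reflected tile of the offending variable is available at all, so that $[\,p,\ n+1-p\,]$ genuinely lies inside $[p,q]$ and is a union of tiles of the appropriate tiling. This is where $\eta$-goodness of \emph{both} monomials is used essentially. By contrast, the bookkeeping of the stars and of the $\pm1$ signs accumulated along the way is automatic, since the star pattern of a monomial is read off directly from its tiling.
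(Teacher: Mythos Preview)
Your proof is correct and follows essentially the same route as the paper: both argue by induction on $m$, compare the leftmost factors of $z$ and $z'$, strip when they agree, and when they disagree invoke the identity $x^{(g)\ast}=\varepsilon_\ast x^{(g)}$ on a symmetric sub-monomial to force agreement. The only cosmetic differences are your geometric ``tiling'' language in place of the paper's free-group words, and that in the non-matching case you always reflect an initial symmetric block and continue from the left, whereas the paper instead shows the rightmost factors must already agree and strips from the right.
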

\begin{proof}
The proof will be by induction on the total degree of $z$, that is, on $m$. There is nothing to do if $m=1$, so we may assume that $m>1$. First, we assume that $z$ is symmetric. Note that two variables of non-trivial degree are linearly dependent modulo $\mathrm{Id}_{G,\ast}(\mathrm{UT}_n,\eta,\ast)$ if and only if their $G$-homogeneous degree coincide. If $z+\mathrm{Id}_{G,\ast}(\mathrm{UT}_n,\eta,\ast)=\pm z'+\mathrm{Id}_{G,\ast}(\mathrm{UT}_n,\eta,\ast)$, then
$$
g_{\sigma(1)}^{\ast'}\cdots g_{\sigma(m)}^{\ast'}=g_1\cdots g_m=\alpha_a\alpha_{a+1}\cdots\alpha_b.
$$
Thus, $g_{\sigma(1)}$ must start with $\alpha_a$. It means that either $g_{\sigma(1)}=g_1$, or $g_{\sigma(1)}=g_1^\ast$. The first equality implies that $g_2\cdots g_m=g_{\sigma(2)}^{\ast'}\cdots g_{\sigma(m)}^{\ast'}$, so $z_2\cdots z_m+\mathrm{Id}_{G,\ast}(\mathrm{UT}_n,\eta,\ast)=\pm z_{\sigma(2)}^{\ast'}\cdots z_{\sigma(m)}^{\ast'}+\mathrm{Id}_{G,\ast}(\mathrm{UT}_n,\eta,\ast)$ and the result follows by induction. For the latter, since $g$ is symmetric, $z=\pm\varepsilon_\ast(z')^\ast$, so it returns to the previous case.

Now, assume that $z$ is not necessarily symmetric. Repeating the notation of the beginning of the argument of the previous paragraph, we obtain that $g_{\sigma(1)}=g_p^\ast$, for some $p\in\{2,\ldots,m-1\}$ (the case $p=1$ is already done, and $p=m$ means that $z$ is symmetric). Writing $g_p=\alpha_u\cdots\alpha_{v}$, it means that $a+v=n$. In particular, for any $k\in\{a,a+1,\ldots,b\}$, we have $b+k>v+a=n$. That is, there is no index $k$ such that $\alpha_k^\ast=\alpha_b$; so there is no index $t\in\{1,\ldots,m\}$ such that $g_t^\ast =g_b$. As a consequence, $g_{\sigma(m)}^{\ast'}=g_m$. Thus, $z_1\cdots z_{m-1}+\mathrm{Id}_{G,\ast}(\mathrm{UT}_n,\eta,\ast)=\pm z_{\sigma(1)}^{\ast'}\cdots z_{\sigma(m-1)}^{\ast'}+\mathrm{Id}_{G,\ast}(\mathrm{UT}_n,\eta,\ast)$, and the result follows by induction.
\end{proof}

As a consequence, we obtain the following characterization.
\begin{Cor}\label{cor}
Let $z_1=x_1^{(g_1)\ast'}$, \dots, $z_m=x_m^{(g_m)\ast'}$, where $g_i\ne1$, for all $i=1,2,\ldots,m$, where $\ast'$ means either $\ast$ or nothing. Then, a subset of
$$
\left\{z_{\sigma(1)}^{\ast'}\cdots z_{\sigma(m)}^{\ast'}\mid\sigma\in\mathcal{S}_m\right\}
$$
is linearly independent modulo $I$ if and only if it is linearly independent modulo $\mathrm{Id}_{G,\ast}(\mathrm{UT}_n,\eta,\ast)$.
\end{Cor}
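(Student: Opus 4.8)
The plan is to establish the inclusion $I\subseteq\mathrm{Id}_{G,\ast}(\mathrm{UT}_n,\eta,\ast)$ and then compare the two ideals on the linear span of the monomials $z_{\sigma(1)}^{\ast'}\cdots z_{\sigma(m)}^{\ast'}$. One direction is immediate: every generator of $I$ from \Cref{basicid} and, for odd $n$, from \Cref{extraextraid}, vanishes on $(\mathrm{UT}_n,\eta,\ast)$, so $I\subseteq\mathrm{Id}_{G,\ast}(\mathrm{UT}_n,\eta,\ast)$; thus a linear relation modulo $I$ is also one modulo $\mathrm{Id}_{G,\ast}(\mathrm{UT}_n,\eta,\ast)$, and already linear independence modulo $\mathrm{Id}_{G,\ast}(\mathrm{UT}_n,\eta,\ast)$ forces linear independence modulo $I$. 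The work is the converse.

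First I would reduce to a single $G$-degree. Both $I$ and $\mathrm{Id}_{G,\ast}(\mathrm{UT}_n,\eta,\ast)$ are $G$-graded ideals of $\mathbb{F}\langle X^G,\ast\rangle$: the former because all of its generators are $G$-homogeneous (keep in mind $\deg_G x^{(g)\ast}=g^\ast$, so identity (ii) of \Cref{basicid} is homogeneous precisely because there $g=g^\ast$), the latter because a homogeneous substitution sends a $G$-homogeneous polynomial of degree $g$ into $(\mathrm{UT}_n)_g$, so each $G$-homogeneous component of a graded $\ast$-identity is again one. As each $z_{\sigma(1)}^{\ast'}\cdots z_{\sigma(m)}^{\ast'}$ is $G$-homogeneous, a family of such monomials is linearly independent modulo a $G$-graded ideal if and only if, for every $d\in G$, the subfamily of those of degree $d$ is. So I fix $d\in G$, let $M_d$ be the set of all such monomials of degree $d$, and it suffices to prove the corollary for subsets of $M_d$. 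Two degenerate cases vanish at once: if some $g_i\notin\mathrm{Supp}\,\eta$ then $x_i^{(g_i)}\in I$ by identity (i) of \Cref{basicid}, hence $M_d\subseteq I$; and if $d\notin\mathrm{Supp}\,\eta$ then again $M_d\subseteq I$, because by \Cref{basiclemma} every element reduces modulo $I$ to a combination of normal monomials, all of whose $G$-degrees lie in $\mathrm{Supp}\,\eta$ (an $\eta$-good sequence multiplies to the degree of a nonzero product of strictly upper triangular matrix units). In either case both independence conditions collapse to ``the subfamily is empty'', so from now on $d=\alpha_a\cdots\alpha_b\in\mathrm{Supp}\,\eta$ and each $g_i=\alpha_{u_i}\cdots\alpha_{v_i}\in\mathrm{Supp}\,\eta$.

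The core of the argument is to show that modulo either ideal the whole of $M_d$ lies on a single line. Because $G$ is free, $(\mathrm{UT}_n)_{g_i}=\mathbb{F}e_{u_i,v_i+1}$ is one-dimensional, so every homogeneous substitution has the shape $x_i^{(g_i)}\mapsto\gamma_i e_{u_i,v_i+1}$; since each $c\in M_d$ is multilinear in $x_1^{(g_1)},\dots,x_m^{(g_m)}$, it is carried to $(\gamma_1\cdots\gamma_m)\,\phi(c)$, where $\phi(c)$ is its value under the substitution with all $\gamma_i=1$ and the scalar $\gamma_1\cdots\gamma_m$ is the same for every $c$. Moreover, since the $G$-degrees of the successive factors of $c$ are consecutive words concatenating without cancellation in the free group $G$ to $d=\deg_\eta e_{a,b+1}$, those factors are forced to be (up to sign) the matrix units of a chain from row $a$ to column $b+1$, so $\phi(c)=\pm e_{a,b+1}\ne0$. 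Hence, for $c,c'\in M_d$, the sign making $\phi(c)=\pm\phi(c')$ makes $c\mp c'$ vanish under every homogeneous substitution, i.e.\ $c=\pm c'$ modulo $\mathrm{Id}_{G,\ast}(\mathrm{UT}_n,\eta,\ast)$; by \Cref{lem5} the same relation, with the same sign, holds modulo $I$. Fixing $c_0\in M_d$, every $c\in M_d$ is therefore $\pm c_0$ modulo both ideals, while $\mu c_0\notin\mathrm{Id}_{G,\ast}(\mathrm{UT}_n,\eta,\ast)$ (hence $\mu c_0\notin I$) for $\mu\ne0$, since $\phi(\mu c_0)=\mu e_{a,b+1}\ne0$. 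Consequently a subset $T\subseteq M_d$ is linearly independent modulo $I$ exactly when $|T|\le1$, and the same criterion characterizes independence modulo $\mathrm{Id}_{G,\ast}(\mathrm{UT}_n,\eta,\ast)$, so the two are equivalent, which proves the corollary. The step I expect to be the real crux is the observation that every $c\in M_d$ has $\phi(c)=\pm e_{a,b+1}\ne0$ — that is, the $G$-degree alone already forces the factors of $c$ into a composable chain of matrix units — since this, combined with one-dimensionality of the components and multilinearity, reduces everything to a single pairwise comparison that is handled by \Cref{lem5}; the remaining points (gradedness of the two ideals, the two degenerate cases) are routine.
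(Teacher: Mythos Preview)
Your proof is correct and follows essentially the same route as the paper: reduce to a fixed $G$-degree using that both ideals are $G$-graded, observe that in a fixed degree every monomial evaluates to a scalar multiple of the unique matrix unit $e_{a,b+1}$, so any two are proportional modulo $\mathrm{Id}_{G,\ast}(\mathrm{UT}_n,\eta,\ast)$, and then invoke \Cref{lem5} to transfer this proportionality to $I$, forcing $|S_g|\le1$. You are more explicit than the paper about the degenerate cases ($g_i\notin\mathrm{Supp}\,\eta$, $d\notin\mathrm{Supp}\,\eta$) and about why the evaluation is actually nonzero (the positive-word argument in the free group forcing a composable chain of matrix units), but the architecture of the argument is the same.
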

\begin{proof}
Let $S$ be a subset, which is linearly independent modulo $I$. It is enough to prove that the subset $S_g=\{m\in S\mid\deg_G m=g\}$ is linearly independent modulo $\mathrm{Id}_{G,\ast}(\mathrm{UT}_n,\eta,\ast)$, for each $g\in G$. Let $i<j$ be such that $\deg_\eta e_{ij}=g$. Since any evaluation of elements of $S_g$ by $(\mathrm{UT}_n,\eta,\ast)$ gives a multiple of $e_{ij}$, we see that two elements of $S_g$ are equal modulo $\mathrm{Id}_{G,\ast}(\mathrm{UT}_n,\eta,\ast)$. From \Cref{lem5}, it gives that two elements of $S_g$ are equal modulo $I$. Since $S_g$ is linearly independent modulo $I$, it shows that each $S_g$ contains at most $1$ element. Thus, $S$ is linearly independent modulo $\mathrm{Id}_{G,\ast}(\mathrm{UT}_n,\eta,\ast)$. The converse is immediate.
\end{proof}

Finally, we have the last key lemma:
\begin{Lemma}\label{keylemma}
Let $\beta$ be a subset of monomials consisting of products of variables of nontrivial degree, such that $\{m+I\mid m\in \beta \}$ is a basis of
$$
\{x_{i_1}^{(g_1)\ast'}\cdots x_{i_s}^{(g_s)\ast'}+I\mid s>0, (g_1,\ldots,g_s)\text{ is $\eta$-good}\}.
$$
Then, the set of normal monomials
$$
\omega_0z_1\omega_1\cdots z_t\omega_t,\quad z_1\cdots z_t\in\beta,
$$
satisfying the conditions of \Cref{lem4}, is a basis of $\mathbb{F}\langle X^G,\ast\rangle$, modulo $\mathrm{Id}_{G,\ast}(\mathrm{UT}_n,\eta,\ast)$.
\end{Lemma}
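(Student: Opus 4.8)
The plan is to prove that the given set of normal monomials spans $\mathbb{F}\langle X^G,\ast\rangle$ modulo $\mathrm{Id}_{G,\ast}(\mathrm{UT}_n,\eta,\ast)$ and that it is linearly independent modulo the same ideal; since $I\subseteq\mathrm{Id}_{G,\ast}(\mathrm{UT}_n,\eta,\ast)$, this simultaneously proves $I=\mathrm{Id}_{G,\ast}(\mathrm{UT}_n,\eta,\ast)$ (i.e. \Cref{polid}, which this lemma is the engine for) and exhibits the claimed basis. Spanning is the easy half: by \Cref{lem4}, every element of the free algebra is, modulo $I$, a linear combination of normal monomials satisfying the conditions of that lemma; and by the hypothesis on $\beta$, in each such monomial the product $z_1\cdots z_t$ of its nontrivial-degree variables can be rewritten modulo $I$ as a linear combination of elements of $\beta$. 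One must check that this rewriting does not destroy the $\omega_j$-normalization or the conditions of \Cref{lem4}; but since the rewriting only permutes the $z_i$ and applies $\ast$ to some of them within symmetric subsequences (this is exactly the content of \Cref{lem5}/\Cref{cor}: linear dependence modulo $I$ of products of nontrivial variables is governed solely by such moves), the symmetric subsequences are preserved as sets of positions, the $\omega_j$ between consecutive $z$'s are unchanged, and the conditions (1)--(3) of \Cref{lem4} continue to hold. Hence spanning modulo $I$, and a fortiori modulo $\mathrm{Id}_{G,\ast}$, follows.

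For linear independence modulo $\mathrm{Id}_{G,\ast}(\mathrm{UT}_n,\eta,\ast)$, I would argue by evaluation in $(\mathrm{UT}_n,\eta,\ast)$. Suppose $\sum_k \lambda_k c_k \in \mathrm{Id}_{G,\ast}(\mathrm{UT}_n,\eta,\ast)$, where the $c_k=\omega_0^{(k)}z_1^{(k)}\omega_1^{(k)}\cdots z_{t_k}^{(k)}\omega_{t_k}^{(k)}$ are distinct normal monomials from the claimed set. First separate by the $G$-degree of the product of the nontrivial-degree variables and by the multidegree in the commuting variables $x_i,x_i^\ast$: a standard multilinearization / diagonal-evaluation argument reduces us to a fixed $\eta$-good sequence $(g_1,\dots,g_t)$, a fixed "shape" recording how the degree-$1$ variables are distributed into blocks $\omega_0,\dots,\omega_t$, and fixed exponents for each such variable. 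Within such a homogeneous, multihomogeneous component, each $c_k$ evaluates — after substituting for the nontrivial variables the matrix units $r_i$ (or $r_i^\ast$) forcing a nonzero path $e_{i_0 i_1}e_{i_1 i_2}\cdots$ realizing $(g_1,\dots,g_t)$, and for the $x_i,x_i^\ast$ suitable diagonal matrices $\sum_p \mu_{p} e_{pp}$ — to a scalar multiple of the single matrix unit $e_{i_0 i_t}$, where the scalar is a monomial in the $\mu_p$'s determined by which diagonal idempotents survive multiplication on the left/right of each $r$ in that path. The conditions of \Cref{lem4} are precisely what guarantee that two \emph{distinct} normal monomials in the list, with the same nontrivial-degree product class, produce genuinely different scalar functions of the $\mu_p$ (the forbidden $\ast$'s in (1)--(3) are exactly the ones that would otherwise be forced equal to unstarred ones by \Cref{extraid} and \Cref{extraextraid} on the unique path through position $k+1$), so by choosing the diagonal entries generically (or, over a finite field, using \Cref{finitecase}/\Cref{lem4} to bound exponents by $q-1$ and invoking the nonvanishing of a nonzero polynomial of degree $<q$ in each variable, via a Vandermonde-type argument) we conclude all $\lambda_k=0$.

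The main obstacle, and the step I would treat most carefully, is the bookkeeping that isolates, inside one multihomogeneous component, the contribution of the starred versus unstarred degree-$1$ variables so that distinct $c_k$'s are distinguished. The point is that $x_i$ and $x_i^\ast$ become, under a diagonal substitution $d=\sum_p\mu_p e_{pp}$, the matrices $d$ and $d^\ast=\sum_p \mu_{n+1-p}e_{pp}$ respectively; so a block $\omega_j=x_{i_1}\cdots x_{i_r}x_{j_1}^\ast\cdots x_{j_s}^\ast$ sitting between $r_{j}$ (ending at row/column index, say, $p$) and $r_{j+1}$ contributes the scalar $\prod_a \mu_{p}^{(i_a)}\prod_b \mu_{n+1-p}^{(j_b)}$ — and one must verify that the list of exponents $(\#x_i\text{ in }\omega_j,\ \#x_i^\ast\text{ in }\omega_j)_{j}$, ranging over the normal monomials allowed by \Cref{lem4}, yields linearly independent (hence, after picking generic $\mu$, distinct) functions. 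When no symmetric subsequence and no middle index $k+1$ is involved, the starred and unstarred counts are free and independent, and distinctness is clear; the delicate cases are exactly where $p$ or $n+1-p$ forces $\mu_p=\mu_{n+1-p}$ on the nose (symmetric block endpoints) or where the path is pinned at the self-paired index $k+1$ — and in those cases \Cref{lem4} has already removed precisely the ambiguous starred variables, so no collision remains. Once this is in place, linear independence modulo $\mathrm{Id}_{G,\ast}$ follows, and combined with spanning we get that the set is a basis and that $I=\mathrm{Id}_{G,\ast}(\mathrm{UT}_n,\eta,\ast)$.
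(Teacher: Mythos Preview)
Your proposal is correct and follows essentially the same approach as the paper: both prove linear independence by evaluating the nontrivial-degree variables at the unique matrix units and the trivial-degree variables at generic diagonal matrices (the paper works directly over the polynomial ring $\mathbb{F}[\xi_{j,\ell}]$ rather than with generic scalars, which handles the finite-field case uniformly without a separate Vandermonde step), and then argue that distinct normal monomials yield distinct commutative monomials in the diagonal parameters precisely because the conditions of \Cref{lem4} forbid starred trivial-degree variables at exactly those positions where the index collision $p=n+1-p'$ could occur. Your spanning paragraph contains a small imprecision---applying $\ast$ to a symmetric chunk $z_a\omega_a\cdots\omega_{b-1}z_b$ does alter and reverse the intermediate $\omega_j$'s, not only the $z$'s, so ``the $\omega_j$ between consecutive $z$'s are unchanged'' is not literally true and one must re-normalize and re-check \Cref{lem4}---but this is easily repaired, and the paper itself leaves the spanning step entirely implicit.
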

\begin{proof}
We consider a set $S$ of monomials, each of them having a same fixed set of variables appearing in all of them. Denote by $x_1$, \dots, $x_k$ the variables of trivial degree appearing in the above set. We fix an order and denote by $z_1$, \dots, $z_t$ the variables of non-trivial degree (having $\ast$ or not, and possibly with repetition of the kind $z_i=x^{(g)}$ and $z_j=x^{(g)\ast}$). The set of monomials in $S$ may be supposed to have the same $G$-degree. From \Cref{cor}, it implies that we may assume that the leading variables of the normal monomials are $z_1$, \dots, $z_t$ and in this order.

For the $z_i$ there is a unique possibility for an evaluation of the kind $e_{p_iq_i}$. Next, we consider the polynomial algebra $\mathbb{F}[\xi_{i,\ell}]$ (in commutative variables), and the evaluation
$$
x_\ell=\sum_{j=1}^n\xi_{j,\ell}e_{jj},\quad\ell=1,\ldots,k.
$$
The evaluation gives $e_{p_1,q_t}$ multiplied by some polynomial in the commuting variables $\xi_{j,\ell}$. For each of the monomials, assume that the variable $x_\ell$ appears in the commutator $\omega_i$. Then, it contributes with the factor $\xi_{\ell,j_i}$ if $x_\ell$ appears without $\ast$, and $\xi_{\ell,n-j_k+1}$ otherwise; or $\xi_{\ell,i_1}$ or $\xi_{\ell,n-i_1+1}$ if in $\omega_0$.

Now, we shall prove that the factor $\xi_{\ell,k}$ uniquely determines the position of the variable of trivial degree. So, assume that $i_p=n-j_q+1$, and suppose that $p<q$. Then $i_p+j_q=n+1$. Thus, $(z_p,\ldots,z_q)$ is symmetric, so there is no $x_\ell^\ast$ that appears in any of $\omega_{p-1}$ or $\omega_q$.

Finally, note that if $\mathbb{F}$ is finite having $q$ elements, then $\xi^q-\xi$ is a polynomial identity for $\mathbb{F}$. However, from the last assertion of \Cref{lem4}, there is no $q$ power of a variable appearing in an evaluation. The result is proved.
\end{proof}

As a consequence, we obtain the first main result of the paper:
\begin{Thm}\label{polid}
Let $\mathbb{F}$ be an arbitrary field and $n\in\mathbb{N}$. Let $G=\langle\alpha_1,\ldots,\alpha_{n-1}\rangle$ be the free group of rank $n-1$, and $\eta=(\alpha_1,\ldots,\alpha_{n-1})$ define an elementary $G$-grading on $\mathrm{UT}_n$. Let $\ast$ be a homogeneous $\eta$-involution on $\mathrm{UT}_n$, and denote $\varepsilon_\ast=1$ if $\ast$ is orthogonal and $\varepsilon_\ast=-1$ otherwise. Then $\mathrm{Id}_{G,\ast}(\mathrm{UT}_n,\eta,\ast)$ follows from:
\begin{enumerate}
\renewcommand{\labelenumi}{(\roman{enumi})}
\item $x^{(g)}$, $g\notin\mathrm{Supp}\,\eta$,
\item $x^{(g)\ast}-\varepsilon_\ast x^{(g)}$, where $g=\deg_\eta e_{ij}$, $i+j=n+1$,
\item $[x_1^{(1)},x_2^{(1)}]$.
\setcounter{bb}{\arabic{enumi}}
\end{enumerate}
If $n=2k+1$ is odd, then we have the extra identities:
\begin{enumerate}
\renewcommand{\labelenumi}{(\roman{enumi})}
\setcounter{enumi}{\arabic{bb}}
\item $(x^{(1)\ast}-x^{(1)})z$, where $\deg_Gz=\deg_\eta e_{k+1,i}$, for some $i$.
\setcounter{bb}{\arabic{enumi}}
\end{enumerate}

In addition, if $\mathbb{F}$ is finite with $q$ elements, then we shall include:
\begin{enumerate}
\renewcommand{\labelenumi}{(\roman{enumi})}
\setcounter{enumi}{\arabic{bb}}
\item $(x^{(1)})^q-x^{(1)}$.
\end{enumerate}

Moreover, a basis of $\mathbb{F}\langle X^G,\ast\rangle$, modulo $\mathrm{Id}_{G,\ast}(\mathrm{UT}_n,\eta,\ast)$, constitutes of all the polynomials described in \Cref{keylemma}.\qed
\end{Thm}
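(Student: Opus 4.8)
The plan is to obtain the theorem by assembling the lemmas proved above, the only genuinely new ingredient being a short observation about $\ast$-closure. Write $I'$ for the $T_{G,\ast}$-ideal generated by the polynomials (i)--(v) listed in the statement, with the cases chosen according to the parity of $n$ and to whether $\mathbb{F}$ is finite. By \Cref{basicid} and \Cref{extraextraid} every one of these polynomials is a $(G,\ast)$-identity of $(\mathrm{UT}_n,\eta,\ast)$, so $I'\subseteq\mathrm{Id}_{G,\ast}(\mathrm{UT}_n,\eta,\ast)$; the content of the theorem is the reverse inclusion together with the basis statement.

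The first step is to check $I\subseteq I'$, where $I$ is the ideal of \Cref{basiclemma}. The generators of $I$ coming from \Cref{basicid} appear verbatim among (i)--(iii) and (v), and the first family of \Cref{extraextraid}, namely $(x^{(1)\ast}-x^{(1)})z_1$ with $\deg_G z_1=\deg_\eta e_{k+1,j}$, is exactly (iv). Only the second family $z_2(x^{(1)\ast}-x^{(1)})$, with $\deg_G z_2=\deg_\eta e_{p,k+1}$ and $p<k+1$, needs an argument: applying the algebra involution to it gives, up to a sign, $(x^{(1)\ast}-x^{(1)})z_2^{\ast}$, and since $\ast$ on $G$ sends $\alpha_i\mapsto\alpha_{n-i}$ and reverses products, one computes $\deg_G z_2^{\ast}=(\alpha_p\cdots\alpha_k)^{\ast}=\alpha_{k+1}\cdots\alpha_{2k+1-p}=\deg_\eta e_{k+1,\,2k+2-p}$, which is again of the form allowed in (iv). As $T_{G,\ast}$-ideals are closed under $\ast$, this puts $z_2(x^{(1)\ast}-x^{(1)})$ in $I'$, whence $I\subseteq I'\subseteq\mathrm{Id}_{G,\ast}(\mathrm{UT}_n,\eta,\ast)$.

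The second step uses the structural results. By \Cref{basiclemma}, \Cref{finitecase} and \Cref{lem4}, together with the reduction of the leading product of nontrivial-degree variables to the basis $\beta$ (as in the first part of the proof of \Cref{keylemma}, via \Cref{cor}), modulo $I$ every element of $\mathbb{F}\langle X^G,\ast\rangle$ is a linear combination of the normal monomials listed in \Cref{keylemma}; since $I\subseteq I'$, these normal monomials also span $\mathbb{F}\langle X^G,\ast\rangle$ modulo $I'$. On the other hand, \Cref{keylemma} asserts that these same normal monomials form a basis of $\mathbb{F}\langle X^G,\ast\rangle$ modulo $\mathrm{Id}_{G,\ast}(\mathrm{UT}_n,\eta,\ast)$, hence in particular are linearly independent there. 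Now take any $f\in\mathrm{Id}_{G,\ast}(\mathrm{UT}_n,\eta,\ast)$. Reducing modulo $I'$ we may write $f\equiv\sum_m c_m m\pmod{I'}$, with $m$ ranging over the normal monomials of \Cref{keylemma}; since $I'\subseteq\mathrm{Id}_{G,\ast}(\mathrm{UT}_n,\eta,\ast)$ and $f$ is an identity, $\sum_m c_m m\in\mathrm{Id}_{G,\ast}(\mathrm{UT}_n,\eta,\ast)$, and linear independence forces every $c_m=0$. Thus $f\in I'$, proving $\mathrm{Id}_{G,\ast}(\mathrm{UT}_n,\eta,\ast)=I'$; the basis assertion is then exactly \Cref{keylemma}.

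I do not anticipate a real obstacle in the present proof: the substantial work has already been carried out in \Cref{lem5}, \Cref{cor} and \Cref{keylemma}, the last of which rules out any further relations among normal monomials via the diagonal evaluation into $\mathbb{F}[\xi_{i,\ell}]$. The only point here requiring attention is the $\ast$-closure computation in the first step, where one must keep track of the combinatorics of $e_{ij}\mapsto e_{n-j+1,n-i+1}$ under the group involution; this is a routine verification using $n=2k+1$.
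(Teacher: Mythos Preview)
Your proof is correct and follows the same path the paper intends: the theorem is stated with a bare \qed, i.e.\ it is meant to be an immediate corollary of \Cref{keylemma} and the preceding reductions. You have simply made explicit the one point the paper suppresses, namely that the second family $z_2(x^{(1)\ast}-x^{(1)})$ of \Cref{extraextraid}, which is part of the defining set for $I$ but is \emph{not} listed among (i)--(v), lies in the $T_{G,\ast}$-ideal generated by (iv) via the involution; your degree computation for $z_2^\ast$ is correct.
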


\section{On codimension sequence}

\subsection{Basic definitions}
We recall some definitions concerning codimension sequence. Let $G$ be a group and consider a $G$-grading $\Gamma$ on an algebra $\mathcal{A}$. Given a sequence $\mu=(g_1,\ldots,g_m)\in G^m$, we let
$$
P_\mu=\mathrm{Span}\left\{x_{\sigma(1)}^{(g_{\sigma(1)})}\cdots x_{\sigma(m)}^{(g_{\sigma(m)})}\mid\sigma\in\mathcal{S}_m\right\},\quad P_\mu(\mathcal{A},\Gamma)=P_\mu/P_\mu\cap\mathrm{Id}_G(\mathcal{A},\Gamma),
$$
where $\mathcal{S}_m$ is the symmetric group on the set of $m$ elements. The \emph{graded codimension sequence} of $(\mathcal{A},\Gamma)$ is
$$
c_m(\mathcal{A},\Gamma)=\dim\sum_{\mu\in G^m}P_\mu(\mathcal{A},\Gamma),\quad m\in\mathbb{N}.
$$
Now, assume that $\ast$ is a homogeneous involution on $(\mathcal{A},\Gamma)$. We denote
$$
P^{(\ast)}_\mu=\mathrm{Span}\left\{\left(x_{\sigma(1)}^{(g_{\sigma(1)})}\right)^{\ast'}\cdots\left(x_{\sigma(m)}^{(g_{\sigma(m)})}\right)^{\ast'}\mid\sigma\in\mathcal{S}_m\right\},
$$
where $\ast'$ means either $\ast$ or nothing.

As before, we let
$$
P_\mu^{(\ast)}(\mathcal{A},\Gamma,\ast)=P_\mu^{(\ast)}/P_\mu^{(\ast)}\cap\mathrm{Id}_{G,\ast}(\mathcal{A},\Gamma,\ast),
$$
and the $(G,\ast)$-codimension sequence is defined by
$$
c_m(\mathcal{A},\Gamma,\ast)=\dim\sum_{\mu\in G^m}P_\mu^{(\ast)}(\mathcal{A},\Gamma,\ast).
$$
The graded exponent and the $(G,\ast)$-exponent (if exists) are respectively defined by
$$
\mathrm{exp}(\mathcal{A},\Gamma)=\lim\limits_{m\to\infty}\sqrt[m]{c_m(\mathcal{A},\Gamma)},\quad\mathrm{exp}(\mathcal{A},\Gamma,\ast)=\lim\limits_{m\to\infty}\sqrt[m]{c_m(\mathcal{A},\Gamma,\ast)}.
$$
Sometimes we shall use the following notation. Given a sequence $\mu=(z_1,\ldots,z_m)$ of variables, where $z_i=x_{j_i}^{(g_i)\ast'}$, then we let
$$
P_\mu=\mathrm{Span}\{z_{\sigma(1)}\cdots z_{\sigma(m)}\mid\sigma\in\mathcal{S}_m\}.
$$

\subsection{Upper triangular matrices} We keep our notation of $(\mathrm{UT}_n,\eta,\ast)$, where $G$ denotes the free group of rank $n-1$. In this section, we assume that $\mathbb{F}$ has characteristic zero.

\noindent\textbf{Notation.} Let $z_1=x_1^{(g_1)\ast'}$, \dots, $z_m=x_m^{(g_m)\ast'}$ be variables of nontrivial degree. We denote
$$
Q(z_1,\ldots,z_m,x_1,\ldots,x_\ell)=\sum P(z_1,\dots,z_m,x_1^{\ast'},\ldots,x_\ell^{\ast'}),
$$
that is, the set of all multilinear polynomials of degree $m+\ell$, where the set of variables of nontrivial degree is $\{z_1,\ldots,z_m\}$.

We compute the asymptotic behaviour of the codimension sequence of $(\mathrm{UT}_n,\eta,\ast)$. For, we split the discussion in a few steps.
\begin{Lemma}\label{codim}
For any $m\in\mathbb{N}$,
$$
c_m^{(G,\ast)}(\mathrm{UT}_n,\eta,\ast)=\sum_{\ell=0}^{\min\{m,n-1\}}\binom{m}{\ell}\ell!\omega(\ell,m),
$$
where $\omega(\ell,m)=\dim\sum Q(x_1^{(g_1)\ast'},\ldots,x_\ell^{(g_\ell)\ast'},x_1,\ldots,x_{m-\ell})$, and the summation runs over all sequences $(g_1,\ldots,g_\ell)$ of nontrivial elements.
\end{Lemma}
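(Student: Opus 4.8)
The plan is to reduce everything to counting the basis elements of the relatively free algebra produced in \Cref{polid,keylemma}. Start from the definition $c_m^{(G,\ast)}(\mathrm{UT}_n,\eta,\ast)=\dim\sum_{\mu\in G^m}P_\mu^{(\ast)}(\mathrm{UT}_n,\eta,\ast)$. The first thing I would establish is that this sum is \emph{direct}, so that $c_m^{(G,\ast)}(\mathrm{UT}_n,\eta,\ast)=\sum_{\mu\in G^m}\dim P_\mu^{(\ast)}(\mathrm{UT}_n,\eta,\ast)$. Indeed, a multilinear monomial records in the superscripts of its (possibly starred) variables exactly the degrees prescribed by its $\mu$, so monomials coming from different $P_\mu^{(\ast)}$ are distinct; and $\mathrm{Id}_{G,\ast}(\mathrm{UT}_n,\eta,\ast)$ is stable under the substitution that sends $x_i^{(h)}\mapsto 0$ whenever $h$ is not the prescribed degree of the $i$-th slot, so every $\mu$-isotypic component of an identity is again an identity.

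Next, the vanishing that produces the range of $\ell$. Any variable of nontrivial degree evaluates on $(\mathrm{UT}_n,\eta)$ into the strictly upper triangular part, and a product of $n$ or more strictly upper triangular matrix units is $0$; moreover, by identity (i) of \Cref{basicid}, $P_\mu^{(\ast)}(\mathrm{UT}_n,\eta,\ast)=0$ unless every entry of $\mu$ lies in $\mathrm{Supp}\,\eta$, which is a $\ast$-invariant set. Hence only those $\mu$ with $\ell$ nontrivial entries, $0\le\ell\le\min\{m,n-1\}$, contribute. Fixing such an $\ell$ and sorting the contributing $\mu$'s by the $\ell$-subset $S\subseteq\{1,\dots,m\}$ of positions carrying a nontrivial degree, and noting that $\dim P_\mu^{(\ast)}(\mathrm{UT}_n,\eta,\ast)$ depends only on $m$, $\ell$ and the (ordered) tuple of nontrivial degrees — relabel variables by order‑preserving bijections $S\to\{1,\dots,\ell\}$ and $\{1,\dots,m\}\setminus S\to\{1,\dots,m-\ell\}$ — one gets
$$
\sum_{\substack{\mu\in G^m\\ \#\{i:\,\mu_i\ne1\}=\ell}}\dim P_\mu^{(\ast)}(\mathrm{UT}_n,\eta,\ast)=\binom{m}{\ell}\sum_{(g_1,\dots,g_\ell)}\dim P_{(g_1,\dots,g_\ell,1,\dots,1)}^{(\ast)}(\mathrm{UT}_n,\eta,\ast),
$$
the inner sum over all tuples of nontrivial elements.

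It remains to match the right‑hand side with $\binom{m}{\ell}\,\ell!\,\omega(\ell,m)$. Here I would invoke \Cref{keylemma} together with \Cref{cor}: a normal monomial displays its nontrivial‑degree variables in a single canonical left‑to‑right order, so the $\ell!$ ways of matching the $\ell$ abstract nontrivial slots of $\mu$ to the ordered positions $z_1,\dots,z_\ell$ occurring in a normal monomial realize $\bigoplus_{(g_1,\dots,g_\ell)}P_{(g_1,\dots,g_\ell,1,\dots,1)}^{(\ast)}(\mathrm{UT}_n,\eta,\ast)$ as $\ell!$ copies of $\sum_{(g_1,\dots,g_\ell)}Q(x_1^{(g_1)\ast'},\dots,x_\ell^{(g_\ell)\ast'},x_1,\dots,x_{m-\ell})$ modulo identities, whose dimension is $\omega(\ell,m)$ by definition. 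Summing over $\ell$ then gives the claimed identity.

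The step deserving the most care — and, I expect, the main obstacle — is exactly this last identification. One must verify, using the normal form of \Cref{keylemma} and the rigidity of \Cref{cor} (two products of nontrivial‑degree variables being equal modulo $\mathrm{Id}_{G,\ast}(\mathrm{UT}_n,\eta,\ast)$ forces equality already modulo $I$), that fixing both the set $S$ and the relative order of the nontrivial variables in the canonical form yields a genuine direct‑sum decomposition into $\binom{m}{\ell}\,\ell!$ subspaces, each of dimension $\omega(\ell,m)$, with no normal monomial counted twice; the characteristic‑zero hypothesis lets us work throughout with multilinear polynomials. Everything else is bookkeeping with the binomial and factorial coefficients.
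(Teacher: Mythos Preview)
Your proposal is correct and follows essentially the same route as the paper: both arguments count the multilinear basis elements furnished by \Cref{keylemma}, stratify by the number $\ell$ of nontrivial-degree variables, extract $\binom{m}{\ell}$ from the choice of subscripts carrying nontrivial degree and $\ell!$ from their ordering inside the normal monomial, and leave $\omega(\ell,m)$ as the residual count. Your write-up is in fact more explicit than the paper's on why $\sum_\mu P_\mu^{(\ast)}$ is direct modulo the identities and on how \Cref{cor} guarantees that the $\ell!$ orderings give genuinely independent pieces, but the underlying argument is the same.
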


\begin{proof}
We let $\mu=(u_1,\ldots,u_m)$, where each $u_i=x_i^{(h_i)\ast'}$ (where, as usual, $\ast'$ means either $\ast$ or nothing) be a sequence, and consider
$$
P_\mu(\mathrm{UT}_n,\eta,\ast)=\mathrm{Span}\left\{u_{\sigma(1)}\cdots u_{\sigma(m)}+\mathrm{Id}_{G,\ast}(\mathrm{UT}_n,\eta,\ast)\mid\sigma\in\mathcal{S}_m\right\}.
$$
It is enough to compute $\dim\sum_\mu P_\mu(\mathrm{UT}_n,\eta,\ast)$, where the sum runs over all sequences $\mu$ of all variables, indexed by $1$, \dots, $m$, and all possibilities of $G$-degree, and with or without $\ast$. Denote $g_i=\deg_\eta z_i$, and assume that $(g_1,\ldots,g_m)$ contains exactly $\ell$ elements of non-trivial degree. Note that $\ell\in\{0,1,\ldots,n-1\}$.

First, there are $\binom{m}{\ell}$ ways to distribute the lower index for the variables of non-trivial degree. Then, you distribute the indexes between the $\ell$ variables, obtaining $\ell!$ possibilities. Next, for each $\eta$-good sequence of length $\ell$, we need to count the number of linearly independent normal monomials (\Cref{normalcom}) in $Q(z_1,\ldots,z_\ell,x_1,\ldots,x_{m-\ell})$ satisfying \Cref{lem4}. This number is denoted by $\omega(\ell,m)$.
\end{proof}

We can compute the exact value of $\omega(\ell,m)$ when $\ell=n-1$. For the other cases, we shall obtain an upper bound for $\omega(\ell,m)$. For, we need to find the symmetric subsequences.

If $\nu=(z_1,\ldots,z_{n-1})$ is a sequence of variables such that their respective degrees $(\deg_\eta z_1,\ldots,\deg_\eta z_{n-1})$ is $\eta$-good, then necessarily the sequence is uniquely determined, namely $\nu=(x_1^{(\alpha_1)},\ldots,x_{n-1}^{(\alpha_{n-1})})$. So, every subsequence of $\nu$ centered in the middle is symmetric. Thus, we obtain:
\begin{Lemma}\label{case:nminusone}
If $n>1$, then $\displaystyle\omega(n-1,m)=2^{\lfloor\frac{n-1}2\rfloor}n^{m-n+1}$.
\end{Lemma}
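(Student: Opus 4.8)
The plan is to compute $\omega(n-1,m)$ by enumerating the normal monomials of \Cref{normalcom} described in \Cref{polid}/\Cref{keylemma} that involve exactly $n-1$ variables of nontrivial degree, and to recognize the count as a product $2^{\lfloor(n-1)/2\rfloor}\cdot n^{m-n+1}$ coming, respectively, from the ``symmetric skeleton'' of nontrivial degrees and from the free placement of the trivial variables.

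The structural input is the one recorded just before the statement: a product of $n-1$ strictly upper triangular matrix units of $\mathrm{UT}_n$ can be nonzero only if it is $e_{12}e_{23}\cdots e_{n-1,n}=e_{1n}$, so the unique $\eta$-good sequence of length $n-1$ is $(\alpha_1,\dots,\alpha_{n-1})$. Hence any relevant normal monomial has the form
\[
c=\omega_0z_{(1)}\omega_1z_{(2)}\cdots z_{(n-1)}\omega_{n-1},
\]
where $z_{(k)}$ is a nontrivial variable of $\eta$-degree $\alpha_k$ and the $n$ words $\omega_0,\dots,\omega_{n-1}$ carry the $m-n+1$ variables of trivial degree; moreover every centered subsequence $(\alpha_a,\dots,\alpha_{n-a})$ is symmetric, and $\alpha_{n/2}$ is symmetric by itself when $n$ is even.

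For the trivial part I would argue that $\ast$ cannot occur in any $\omega_j$: since $(\alpha_1,\dots,\alpha_{n-1})$ and each of its centered subsequences is symmetric, condition (1) of \Cref{lem4} forbids $\ast$ in $\omega_0,\dots,\omega_{n-1}$ except possibly the central block(s), and these are eliminated in turn by conditions (2)--(3) of \Cref{lem4} when $n$ is odd and by \Cref{extraid} applied to the symmetric variable $z_{(n/2)}$ when $n$ is even. Thus each of the $m-n+1$ trivial variables is placed into one of the $n$ blocks with no sign ambiguity, which accounts for the factor $n^{m-n+1}$; distinctness and linear independence of the resulting monomials come from the diagonal evaluation $x_\ell\mapsto\sum_j\xi_{j,\ell}e_{jj}$ used in the proof of \Cref{keylemma}, which sends different block placements to different monomials in the $\xi$'s. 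For the nontrivial part, the slot of $\eta$-degree $\alpha_k$ may be filled either by a variable of actual degree $\alpha_k$ (used without $\ast$) or of actual degree $\alpha_{n-k}$ (used with $\ast$); by \Cref{lem5} and \Cref{cor} the skeleton $z_{(1)}\cdots z_{(n-1)}$ is then determined up to sign, and counting the admissible $\eta$-degree data of the $n-1$ nontrivial variables against the combinatorial normalization of \Cref{codim} leaves exactly one factor of $2$ for each of the $\lfloor(n-1)/2\rfloor$ symmetric pairs $\{\alpha_k,\alpha_{n-k}\}$ (the central degree $\alpha_{n/2}$ being rigid, since $x^{(\alpha_{n/2})\ast}-\varepsilon_\ast x^{(\alpha_{n/2})}\in I$). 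Multiplying gives $\omega(n-1,m)=2^{\lfloor\frac{n-1}{2}\rfloor}n^{m-n+1}$.

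The delicate points are the two I flagged: checking that \Cref{lem4} and \Cref{extraid} really do kill every occurrence of $\ast$ in the blocks (in particular near the symmetric variable $z_{(n/2)}$, where one must make sure the normal form has no residual freedom), and matching the $2^{\lfloor(n-1)/2\rfloor}$ skeleton configurations with the $\binom{m}{\ell}\,\ell!$ prefactor of \Cref{codim}; the rest is a routine product count.
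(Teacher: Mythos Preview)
Your proposal is correct and follows essentially the same route as the paper's proof: identify the unique $\eta$-good sequence $(\alpha_1,\dots,\alpha_{n-1})$, argue that every $\omega_j$ is star-free (so the $m-n+1$ trivial variables contribute the factor $n^{m-n+1}$), and count the admissible nontrivial skeletons as $2^{\lfloor (n-1)/2\rfloor}$ by using the nested symmetric subsequences $(\alpha_a,\dots,\alpha_{n-a})$ to normalize the stars on one half.

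Two small remarks. First, in the even case $n=2k$ you do not need to invoke \Cref{extraid} separately: the singleton $(\alpha_k)$ is itself a symmetric subsequence, so condition~(1) of \Cref{lem4} already forbids a star in both $\omega_{k-1}$ and $\omega_k$, and together with the other centered subsequences this covers all of $\omega_0,\dots,\omega_{n-1}$. Your appeal to \Cref{extraid} is not wrong, just redundant. Second, the ``delicate point'' you flag about reconciling the $2^{\lfloor (n-1)/2\rfloor}$ count with the $\binom{m}{\ell}\,\ell!$ prefactor is really a concern about the normalization in \Cref{codim} rather than about this lemma; the paper simply computes $\omega(n-1,m)$ in isolation, as you do, and leaves that bookkeeping implicit.
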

\begin{proof}
Since there is a single $\eta$-good sequence of length $n-1$, we know that $z_i=x_i^{(\alpha_i)}$ or $z_i=x_i^{(\alpha_{n-i})\ast}$. Now, since $(\alpha_j,\alpha_{j+1},\ldots,\alpha_{n-j})$ is symmetric for all $j=1,2,\ldots,\lfloor\frac{n-1}2\rfloor$, we may apply $\ast$ to the respective symmetric subsequence, obtaining a multiple of itself. If $n$ is even, then $(\alpha_{\frac{n}2})$ is symmetric, so we may remove the $\ast$ from $z_{\frac{n}2}$. Hence, we may assume that the $\ast$ does not appear on all $z_j$ where $j>\lfloor\frac{n-1}2\rfloor$. Thus, there are $2^{\lfloor\frac{n-1}2\rfloor}$ ways to choose between the variables of non-trivial degree. We need to compute the number of polynomials in $$Q(z_1,\ldots,z_{n-1},x_1,\ldots,x_{m-n+1}),$$ satisfying \Cref{lem4}. Thus, from the previous discussion, all the variables of trivial degree appears without $\ast$. There are $n$ possible positions for each variable of trivial. Thus, the number of such polynomials coincide with the number of ways of putting $m-(n-1)$ distinct balls in $n$ boxes. This total number equals $n^{m-n+1}$.
\end{proof}

Now, we shall obtain an upper bound for the numbers $\omega(\ell,m)$, where $\ell<n-1$. We start with:
\begin{Lemma}\label{firstupper}
If $\displaystyle\ell\le\left\lfloor\frac{n-1}2\right\rfloor$, then
$$
\omega(\ell,m)\le2^m\binom{n}{\ell+1}(\ell+1)^{m-\ell}.
$$
In particular, if $n$ is even or $\ell<\lfloor\frac{n-1}2\rfloor$, then $\omega(\ell,m)\le2^\ell\binom{n}{\ell+1}n^{m-\ell}$.
\end{Lemma}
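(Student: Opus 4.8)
The plan is to bound $\omega(\ell,m)$ by a crude over-count of normal monomials, simply discarding the requirement of linear independence and the restrictions of \Cref{lem4}, both of which can only decrease the count. First I would recall, from the proof of \Cref{codim}, that $\omega(\ell,m)$ is the sum, over all $\eta$-good sequences $(g_1,\ldots,g_\ell)$ of nontrivial degrees, of the number of linearly independent normal monomials (in the sense of \Cref{normalcom}) of $Q(z_1,\ldots,z_\ell,x_1,\ldots,x_{m-\ell})$ satisfying \Cref{lem4}, where $\deg_G z_k=g_k$. An $\eta$-good sequence of length $\ell$ is the same datum as a chain $1\le i_1<i_2<\cdots<i_{\ell+1}\le n$, with $\deg_G z_k=\deg_\eta e_{i_k,i_{k+1}}$, so that $z_1\cdots z_\ell$ evaluates to a scalar multiple of $e_{i_1,i_{\ell+1}}$; there are $\binom{n}{\ell+1}$ of them. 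Hence it suffices to bound, uniformly in the chain, the number of normal monomials $\omega_0z_1\omega_1\cdots z_\ell\omega_\ell$ built on a fixed chain.

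Then I would carry out that count. The nontrivial variables $z_1,\ldots,z_\ell$ — each of the form $x^{(g)}$ or $x^{(g)\ast}$ — contribute a factor of at most $2^\ell$ (in fact fewer, using identity (ii) of \Cref{basicid} for symmetric degrees, but $2^\ell$ is a safe over-estimate). For the trivial-degree part there are exactly $\ell+1$ interstitial blocks $\omega_0,\ldots,\omega_\ell$, and since the monomial is multilinear and, inside each block, the unstarred variables precede the starred ones, each listed in increasing order of index, a normal monomial is completely determined once we record, for each of the $m-\ell$ variables of trivial degree, which block it sits in ($\ell+1$ choices) and whether it carries a $\ast$ ($2$ choices): at most $(2(\ell+1))^{m-\ell}$ possibilities per chain. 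Multiplying,
$$
\omega(\ell,m)\le 2^\ell\binom{n}{\ell+1}\bigl(2(\ell+1)\bigr)^{m-\ell}=2^m\binom{n}{\ell+1}(\ell+1)^{m-\ell},
$$
which is the first inequality. For the ``in particular'', the point is only arithmetic: if $n$ is even and $\ell\le\lfloor\frac{n-1}2\rfloor$, or if $\ell<\lfloor\frac{n-1}2\rfloor$, then in either case $2(\ell+1)\le n$, hence $\bigl(2(\ell+1)\bigr)^{m-\ell}\le n^{m-\ell}$ and therefore
$$
2^m\binom{n}{\ell+1}(\ell+1)^{m-\ell}=2^\ell\binom{n}{\ell+1}\bigl(2(\ell+1)\bigr)^{m-\ell}\le 2^\ell\binom{n}{\ell+1}n^{m-\ell}.
$$

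The only genuinely delicate step is the reduction in the first paragraph — reading off from the proof of \Cref{codim} that $\omega(\ell,m)$ is a sum, over the $\binom{n}{\ell+1}$ $\eta$-good sequences of length $\ell$, of counts of normal monomials satisfying \Cref{lem4}. This rests on the normal-form results \Cref{basiclemma} and \Cref{lem4}, on the basis statement \Cref{keylemma}, and (tacitly) on normal monomials attached to distinct chains lying in distinct homogeneous components of the relatively free algebra, so that the dimension of the span adds up over chains. Once that identification is granted, the remainder is the elementary count above together with a one-line inequality; I would double-check the convention that there are exactly $\ell+1$ interstitial blocks for $\ell$ nontrivial variables, and note that the hypothesis $\ell\le\lfloor\frac{n-1}2\rfloor$ is needed only in the ``in particular'' clause, through $2(\ell+1)\le n$.
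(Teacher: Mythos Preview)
Your proof is correct and follows essentially the same approach as the paper: count the $\binom{n}{\ell+1}$ $\eta$-good sequences of length $\ell$, allow $2^\ell$ star-choices on the nontrivial variables, and for each of the $m-\ell$ trivial variables allow $\ell+1$ block-positions times $2$ star-choices, then deduce the second bound from $2(\ell+1)\le n$. Your write-up is in fact slightly more explicit than the paper's in justifying the final arithmetic step and in noting that the hypothesis $\ell\le\lfloor\frac{n-1}{2}\rfloor$ is only used in the ``in particular'' clause.
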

\begin{proof}
There is a total of $\binom{n}{\ell+1}$ $\eta$-good sequences of length $\ell+1$. Indeed, this number equals the number of nonzero products of strict upper triangular matrix units $e_{i_1j_1}e_{i_2j_2}\cdots e_{i_\ell j_{\ell}}$. The former is nonzero if and only if
$$
1\le i_1<j_1=i_2<j_2=\cdots =i_\ell<j_\ell\le n.
$$
Hence, this is the number of ways of choosing $\ell+1$ distinct numbers in the set $\{1,2,\ldots,n\}$. This last number is exactly $\binom{n}{\ell+1}$. Given $\ell$ variables of non-trivial degree, each of them may appear with $\ast$ or not; obtaining at most $2^\ell$ possibilities.

Now, fixed an $\eta$-good sequence of length $\ell$, we are left with $m-\ell$ variables of trivial degree. Each of them may appear in some of the $\omega_i$; so there is a total of $(\ell+1)^{m-\ell}$ possibilities. Now, each of the variables may or may not contain $\ast$, totaling at most $2^{m-\ell}$ possibilities. Hence,
$$
\omega(\ell,m)\le\binom{n}{\ell+1}2^\ell2^{m-\ell}(\ell+1)^{m-\ell}=2^m\binom{n}{\ell+1}(\ell+1)^{m-\ell}.
$$
Since $2\ell\le2\lfloor\frac{n-1}2\rfloor\le n-1$, where the inequality is strict if $n$ is even, we get the last assertion.
\end{proof}

Now, we need a technical result:
\begin{Lemma}\label{keylemmacodim}
Assume that $n>1$ and let $\mu=(g_1,\ldots,g_\ell)$ be an $\eta$-good sequence. If $\displaystyle\ell\ge\left\lfloor\frac{n-1}2\right\rfloor$, then $\mu$ contains at least $\ell-\lceil\frac{n}2\rceil+1$ symmetric subsequences. Additionally, if $n$ is odd, then either we may increase 1 extra symmetric subsequence or $\mu$ contains a variable of degree $\deg_\eta e_{k+1,j}$ or $\deg_\eta e_{j,k+1}$.
\end{Lemma}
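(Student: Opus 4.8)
The plan is to reduce the assertion to a short counting problem about a set of integers. Recall from the proof of \Cref{firstupper} that an $\eta$-good sequence $\mu=(g_1,\dots,g_\ell)$ is the same datum as a choice of integers $1\le p_0<p_1<\dots<p_\ell\le n$ with $g_i=\deg_\eta e_{p_{i-1},p_i}=\alpha_{p_{i-1}}\cdots\alpha_{p_i-1}$. Under this correspondence, for $0\le a<b\le\ell$ the product $g_{a+1}\cdots g_b$ equals $\deg_\eta e_{p_a,p_b}$, which is symmetric exactly when $p_a+p_b=n+1$; when $b=a+1$ this records that $g_{a+1}$ is a symmetric variable. Distinct pairs $(a,b)$ yield distinct such subsequences, so, counting an isolated symmetric variable as a length-one symmetric subsequence, it suffices to show that the number $N$ of pairs $0\le a<b\le\ell$ with $p_a+p_b=n+1$ is at least $\ell-\lceil n/2\rceil+1$.

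I would then bring in the order-two map $\iota\colon x\mapsto n+1-x$ of $\{1,\dots,n\}$ and set $P=\{p_0,\dots,p_\ell\}$ and $S=P\cap\iota(P)$. A pair counted by $N$ is precisely a two-element $\iota$-orbit contained in $P$, so $|S|=2N+\varepsilon$, where $\varepsilon=1$ if $n=2k+1$ is odd and the central point $k+1$ belongs to $P$, and $\varepsilon=0$ otherwise (for $n$ even, $\iota$ has no fixed point). Since by inclusion--exclusion
$$
|S|=|P|+|\iota(P)|-|P\cup\iota(P)|=2(\ell+1)-|P\cup\iota(P)|\ge 2(\ell+1)-n,
$$
we obtain
$$
N=\frac12\bigl(|S|-\varepsilon\bigr)\ge\frac12\bigl(2(\ell+1)-n-\varepsilon\bigr)=\ell+1-\frac{n+\varepsilon}{2}\ge\ell-\left\lceil\frac n2\right\rceil+1,
$$
the last inequality because $(n+\varepsilon)/2\le\lceil n/2\rceil$ in both parities (using $\varepsilon\le1$ when $n$ is odd). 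This is the required bound; note that the hypothesis $\ell\ge\lfloor\frac{n-1}2\rfloor$ is exactly what guarantees $\ell-\lceil n/2\rceil+1\ge0$.

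For the addendum, let $n=2k+1$ and write $c=k+1$. If $c\in P$, say $c=p_a$, then since $\ell\ge k\ge1$ at least one of $g_a=\deg_\eta e_{p_{a-1},k+1}$ (defined when $a\ge1$) or $g_{a+1}=\deg_\eta e_{k+1,p_{a+1}}$ (defined when $a\le\ell-1$) occurs in $\mu$, and it has degree $\deg_\eta e_{j,k+1}$ or $\deg_\eta e_{k+1,j}$ respectively. If $c\notin P$, then also $c\notin\iota(P)$, hence $c\notin P\cup\iota(P)$ and $|P\cup\iota(P)|\le n-1$; now $\varepsilon=0$, so the computation above improves to $N=\frac12|S|\ge(\ell+1)-\frac{n-1}{2}=(\ell-\lceil n/2\rceil+1)+1$, i.e.\ one symmetric subsequence more than in the general bound.

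I do not anticipate a genuine obstacle: once the cut-point description and the involution $\iota$ are in place, the heart of the argument is the single inclusion--exclusion line above. The points needing care are the parity bookkeeping in the odd case --- keeping track of the fixed point $c$ through $\varepsilon$ and of floors versus exact halves --- and fixing at the outset the convention that an isolated symmetric variable is counted among the symmetric subsequences, so that $N$ agrees with the quantity used elsewhere (e.g.\ in \Cref{case:nminusone}).
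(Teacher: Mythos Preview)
Your argument is correct and is essentially the same as the paper's: both identify an $\eta$-good sequence with a set $P=\{p_0,\dots,p_\ell\}\subseteq\{1,\dots,n\}$, observe that symmetric subsequences correspond to pairs $\{j,n+1-j\}\subseteq P$, and then count how many such pairs are forced by $|P|=\ell+1$. The paper phrases this as a pigeonhole argument with ``shelves'' (the orbits of your involution $\iota$), while you write the equivalent inclusion--exclusion line $|P\cap\iota(P)|\ge 2(\ell+1)-n$; your remark about counting a single symmetric variable as a length-one symmetric subsequence is well taken, since the paper's own usage (e.g.\ in \Cref{case:nminusone}) implicitly adopts that convention.
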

\begin{proof}
Consider a stand having $\lceil\frac{n}2\rceil$ shelves. If $n$ is even, then each shelf has exactly two numbered slots that can accommodate a ball. The numbering is as follows: the first lower shelf contains the left slot $1$ and right slot $n$, the second shelf above the first has the left slot $2$ and right slot $n-1$, and so on. If $n$ is odd, then there are $\frac{n-1}{2}$ shelves with two numbered slots as before, and a last upper shelf with a single slot with the number $\frac{n+1}2$, and we shall call it a center slot (that is, it is not a right nor left slot).

Now, we put $\ell+1$ balls in the slots. Assume that the balls lies in the slots with (ordered) numbers $(i_1,i_2,\ldots,i_{\ell+1})$. Then, the positions of the balls define the $\eta$-good sequence $(\deg_\eta e_{i_1i_2},\deg_\eta e_{i_2i_3},\ldots,\deg_\eta e_{i_\ell i_{\ell+1}})$, and conversely, each $\eta$-good sequence may be represented by putting balls in the adequate slots. Set $z_j=x^{(\deg_\eta e_{i_ji_{j+1}})}$, for each $j=1,2,\ldots,\ell$.

Now, note that the number of symmetric subsequences of $(z_1,\ldots,z_\ell)$ is exactly the number of fully completed shelves. Additionally, if $n=2k+1$, then $(z_1,\ldots,z_\ell)$ contains a variable of degree $\deg_\eta e_{k+1,j}$ or $\deg_\eta e_{j,k+1}$ if and only if the upper shelf has a ball. So the result follows from pigeonhole principle.
\end{proof}

\begin{Lemma}\label{secondupper}
If $\displaystyle\ell\ge\left\lfloor\frac{n-1}2\right\rfloor$, then we have
$$
\omega(\ell,m)\le2^\ell\binom{n}{\ell+1}n^{m-\ell}.
$$
\end{Lemma}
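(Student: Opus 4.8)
The plan is to reduce the statement to a purely combinatorial estimate on ``restricted gaps'' and then feed in \Cref{keylemmacodim}. Fix an $\eta$-good sequence of nontrivial degrees of length $\ell$; the number of such sequences is $\binom{n}{\ell+1}$ (cf.\ the proof of \Cref{firstupper}), and each arises from an underlying chain of matrix units together with, for each link, a choice of $\ast$ or not, so at most $2^\ell$ decorations. For a fixed sequence the variables $z_1,\ldots,z_\ell$ of nontrivial degree occur in a prescribed order, and each of the remaining $m-\ell$ trivial-degree variables is assigned, together with a $\ast$-bit, to one of the $\ell+1$ gaps $\omega_0,\ldots,\omega_\ell$, the internal order of a gap being forced by \Cref{normalcom}. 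Call a gap \emph{restricted} if \Cref{lem4} forbids $\ast$ there, and let $r$ be the number of restricted gaps. A single trivial-degree variable then has at most $2(\ell+1)-r$ admissible (gap, $\ast$-bit) choices, so the number of normal monomials built on the given sequence is at most $2^\ell\bigl(2(\ell+1)-r\bigr)^{m-\ell}$. Summing over the $\binom{n}{\ell+1}$ sequences, it suffices to show $r\ge 2\ell+2-n$, i.e.\ $2(\ell+1)-r\le n$, for every $\eta$-good sequence of nontrivial degrees of length $\ell$.

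To estimate $r$ I would use the ``stand of shelves'' model from the proof of \Cref{keylemmacodim}: the sequence is encoded by $\ell+1$ balls in the slots of $\lceil n/2\rceil$ shelves, and there is a natural bijection between the gaps $\omega_0,\ldots,\omega_\ell$ and the balls. Symmetric subsequences correspond exactly to completed shelves, and by \Cref{lem4}(1) each symmetric subsequence forces the two gaps adjacent to its symmetric block to be restricted; under the bijection these two gaps are attached to the two balls of that shelf (up to a harmless shift). Since distinct completed shelves are nested, their pairs of balls, hence their pairs of restricted gaps, are pairwise disjoint, so \Cref{keylemmacodim} yields $r\ge 2\bigl(\ell-\lceil n/2\rceil+1\bigr)$. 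For $n$ even this equals $2\ell+2-n$ and we are done. For $n=2k+1$ it equals $2\ell+1-n$, one short, and here I would invoke the extra clause of \Cref{keylemmacodim}: either there is one further completed shelf, contributing two more restricted gaps, again disjoint from the others by nesting; or the sequence contains a variable $z_i$ of degree $\deg_\eta e_{k+1,j}$ or $\deg_\eta e_{j,k+1}$, in which case \Cref{lem4}(2) or (3) forces one extra gap ($\omega_{i-1}$ or $\omega_i$) to be restricted, a gap attached to the central slot $k+1$ and hence distinct from all previously counted ones, which sit at non-central slots. Either way $r\ge 2\ell+2-n$.

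I expect the combinatorial bookkeeping of the restricted gaps to be the main obstacle: one must pin down precisely which gaps \Cref{lem4} forbids $\ast$ in for a given symmetric subsequence (and for a central-degree variable), and then verify that, over a suitable choice of $\ell-\lceil n/2\rceil+1$ symmetric subsequences together with the extra structure in the odd case, these gaps are genuinely pairwise distinct. The nesting of the completed shelves is exactly what rules out overlaps, and one must also check the boundary situations — a symmetric block that begins with the first variable or ends with the last one — so that all the claimed restricted gaps really lie among $\omega_0,\ldots,\omega_\ell$. Once disjointness is secured, the inequality $2(\ell+1)-r\le n$ follows at once, and summing over the $\binom{n}{\ell+1}$ sequences gives the stated bound.
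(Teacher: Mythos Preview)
Your proposal is correct and follows essentially the same route as the paper: fix an $\eta$-good sequence (there are $\binom{n}{\ell+1}$ of them), allow $2^\ell$ decorations on the nontrivial variables, and bound the number of admissible (gap, $\ast$-bit) choices for each trivial variable by $2(\ell+1)-r\le n$ using \Cref{keylemmacodim}, splitting on the parity of $n$ for the final count. The paper carries out exactly this computation, writing the per-variable bound directly as $2(\ell-\lceil n/2\rceil+1)+2(\ell+1-2(\ell-\lceil n/2\rceil+1))$ (and its odd-$n$ sharpening) rather than introducing your parameter $r$; your treatment is somewhat more explicit about why the restricted gaps coming from different symmetric subsequences (and, in the odd case, from the central slot) are pairwise distinct, a point the paper leaves implicit.
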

\begin{proof}
The begining of the proof is the same as the first paragraph of the proof of \Cref{firstupper}. So, let us fix an $\eta$-good sequence of length $\ell$. Every variable of nontrivial degree may or may not have an $\ast$, resulting in at most $2^\ell$ possibilities. As before, we have $m-\ell$ variables of trivial degree. Let $x$ be a variable of trivial degree. From \Cref{keylemmacodim}, there are at least $\ell-\lceil\frac{n}2\rceil+1$ symmetric subsequences. It means that, in at least $2(\ell-\lceil\frac{n}2\rceil+1)$ positions, $x$ appears without an $\ast$; and the remaining positions it may appear as $x$ or $x^\ast$. For, we have at most
$$
\left(2(\ell-\left\lceil\frac{n}2\right\rceil+1)+2(\ell+1-2(\ell-\left\lceil\frac{n}2\right\rceil+1))\right)^{m-\ell}
$$
possibilities. As a consequence, if $n$ is even, then
$$
\omega(\ell,m)\le2^\ell\binom{n}{\ell+1}\left(2\left\lceil\frac{n}2\right\rceil\right)^{m-\ell}=2^\ell\binom{n}{\ell+1}n^{m-\ell}.
$$
If $n$ is odd, then \Cref{keylemmacodim} gives a sharper estimate. In comparasion of the even case, there is at least one extra position where $x$ appears without an $\ast$. Then, we obtain
$$
\left(2(\ell-\left\lceil\frac{n}2\right\rceil+1)+1+2(\ell+1-2(\ell-\left\lceil\frac{n}2\right\rceil+1)-1)\right)^{m-\ell}
$$
at most possibilities to distribute the variables of trivial degree. This gives
$$
\omega(\ell,m)\le2^\ell\binom{n}{\ell+1}\left(2\left\lceil\frac{n}2\right\rceil-1\right)^{m-\ell}=2^\ell\binom{n}{\ell+1}n^{m-\ell}.
$$
The result is complete.
\end{proof}

Given two maps $f,g:\mathbb{N}\to\mathbb{N}$, we denote $f\sim g$ if $\lim_{n\to\infty}f(n)/g(n)=1$. As a consequence, we obtain the following result:
\begin{Thm}\label{codimThm}
Let $\mathbb{F}$ be a field of characteristic $0$ and $n>1$. Then the asymptotic behaviour of the $(G,\ast)$-codimension sequence of $(\mathrm{UT}_n,\eta,\ast)$ is
$$
c_m(\mathrm{UT}_n,\eta,\ast)\sim\frac{2^{\lfloor\frac{n-1}2\rfloor}}{n^{n-1}}m^{n-1}n^m
$$
In particular, $\mathrm{exp}(\mathrm{UT}_n,\eta,\ast)=n$.
\end{Thm}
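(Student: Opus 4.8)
The plan is to read the asymptotics off directly from \Cref{codim}, isolating the single dominant term. Assume $m\ge n-1$, so that
$$
c_m(\mathrm{UT}_n,\eta,\ast)=\sum_{\ell=0}^{n-1}\binom{m}{\ell}\ell!\,\omega(\ell,m).
$$
First I would compute the top term $\ell=n-1$ exactly. Since $\binom{m}{\ell}\ell!=\frac{m!}{(m-\ell)!}=m(m-1)\cdots(m-\ell+1)$, \Cref{case:nminusone} gives that this term equals
$$
\frac{m!}{(m-n+1)!}\,2^{\lfloor\frac{n-1}2\rfloor}n^{m-n+1}.
$$
As $m\to\infty$ the product $\frac{m!}{(m-n+1)!}=m(m-1)\cdots(m-n+2)$ is a product of $n-1$ factors each asymptotic to $m$, hence $\frac{m!}{(m-n+1)!}\sim m^{n-1}$, and the top term is $\sim\frac{2^{\lfloor\frac{n-1}2\rfloor}}{n^{n-1}}m^{n-1}n^m$, which is exactly the claimed expression.

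Next I would show that the remaining terms, those with $0\le\ell\le n-2$, are negligible. Every such $\ell$ satisfies $\ell\le\lfloor\frac{n-1}2\rfloor$ or $\ell\ge\lfloor\frac{n-1}2\rfloor$, so combining \Cref{firstupper} (its ``in particular'' clause) and \Cref{secondupper} one gets, in all cases, the uniform bound $\omega(\ell,m)\le2^\ell\binom{n}{\ell+1}n^{m-\ell}$ (for the borderline case $n$ odd and $\ell=\lfloor\frac{n-1}2\rfloor$ one must invoke \Cref{secondupper} rather than \Cref{firstupper}). Using $\binom{m}{\ell}\ell!=\frac{m!}{(m-\ell)!}\le m^\ell$, this gives
$$
\binom{m}{\ell}\ell!\,\omega(\ell,m)\le 2^\ell\binom{n}{\ell+1}m^\ell n^{m-\ell}.
$$
Dividing by $\frac{2^{\lfloor\frac{n-1}2\rfloor}}{n^{n-1}}m^{n-1}n^m$, the quotient is a constant (depending only on $n$ and $\ell$) times $m^{\ell-n+1}$, which tends to $0$ because $\ell-n+1\le-1$. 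Since there are only finitely many values of $\ell$ involved, the sum of all these lower-order terms is $o(m^{n-1}n^m)$, and therefore $c_m(\mathrm{UT}_n,\eta,\ast)\sim\frac{2^{\lfloor\frac{n-1}2\rfloor}}{n^{n-1}}m^{n-1}n^m$.

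Finally, for the exponent I would take $m$-th roots: writing $c=\frac{2^{\lfloor\frac{n-1}2\rfloor}}{n^{n-1}}$, we have
$$
\sqrt[m]{c_m(\mathrm{UT}_n,\eta,\ast)}=c^{1/m}\,m^{(n-1)/m}\,n\,(1+o(1))\longrightarrow n,
$$
since $c^{1/m}\to1$ for any positive constant $c$ and $m^{(n-1)/m}\to1$. Hence $\mathrm{exp}(\mathrm{UT}_n,\eta,\ast)=n$. I do not expect a genuine obstacle in this argument: the substantive work has already been carried out in \Cref{codim}, \Cref{case:nminusone}, \Cref{firstupper}, and \Cref{secondupper}, and what remains is a routine comparison of growth rates. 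The only points requiring a little care are the elementary estimate $\frac{m!}{(m-k)!}\sim m^k$ and keeping track of the borderline index $\ell=\lfloor\frac{n-1}2\rfloor$ when $n$ is odd, where the correct upper bound for $\omega(\ell,m)$ comes from \Cref{secondupper}.
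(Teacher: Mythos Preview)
Your argument is correct and follows essentially the same route as the paper: use \Cref{codim} to split $c_m$ into the terms $\binom{m}{\ell}\ell!\,\omega(\ell,m)$, compute the $\ell=n-1$ term exactly via \Cref{case:nminusone}, and kill the terms with $\ell<n-1$ using the uniform bound $\omega(\ell,m)\le 2^\ell\binom{n}{\ell+1}n^{m-\ell}$ from \Cref{firstupper}/\Cref{secondupper}. Your handling of the borderline index $\ell=\lfloor\frac{n-1}{2}\rfloor$ when $n$ is odd is exactly right.
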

\begin{proof}
Let $k=\lfloor\frac{n-1}2\rfloor$. We use the expression of $c_m(\mathrm{UT}_n,\eta,\ast)$ computed in \Cref{codim}. It is enough to prove that, for each $\ell=0,1,\ldots,n-1$,
$$
\lim_{n\to\infty}\frac{\binom{m}{\ell}\ell!\omega(\ell,m)}{2^km^{n-1}n^{m-n+1}}=\delta_{\ell,n-1}.
$$
We shall deal with the case $\ell<n-1$ first. Using either \Cref{firstupper} or \Cref{secondupper}, we obtain
$$
\lim_{m\to\infty}\frac{\binom{m}{\ell}\ell!\omega(\ell,m)}{2^km^{n-1}n^{m-n+1}}\le\lim_{m\to\infty}\frac{\binom{m}{\ell}\ell!2^\ell\binom{n}{\ell+1}n^{m-\ell}}{2^km^{n-1}n^{m-n+1}}=0.
$$
Now, from the value of $\omega(n-1,m)$ computed in \Cref{case:nminusone}, we obtain
$$
\lim_{m\to\infty}\frac{\binom{m}{n-1}(n-1)!2^{k}n^{m-n+1}}{2^km^{n-1}n^{m-n+1}}=1.
$$
The result is proved.
\end{proof}
\begin{Remark}
If $n=1$, then, modulo the identities, we are left with commutative variables $x_i$ satisfying $x_i^\ast=x_i$. It means that
$$
c_m(\mathrm{UT}_1(\mathbb{F}),\eta,\ast)=1,\quad\forall m\in\mathbb{N}.
$$
In particular, $\mathrm{exp}(\mathrm{UT}_1(\mathbb{F}),\eta,\ast)=1$.
\end{Remark}

\subsection{Other gradings and involution} Now, let $H$ be any group and consider an elementary $H$-grading $\Gamma$ on $\mathrm{UT}_n$, and assume that $\ast$ is a homogeneous involution on $(\mathrm{UT}_n,\Gamma)$. Then, there is group homomorphism $G\to H$ such that $\psi:\alpha_i\mapsto\deg_\Gamma e_{i,i+1}$, for $i=1,2,\ldots,n-1$. So $\psi$ coalesce the grading $\eta$ obtaining $\Gamma$. Thus, the $(H,\ast)$-algebra $(\mathrm{UT}_n,\Gamma,\ast)$ is obtained from $(\mathrm{UT}_n,\eta,\ast)$ by a coarsening of the grading. Hence, $(\mathrm{UT}_n,\eta,\ast)$ is, in some sense, the generator of the structures of star-graded-algebra on $\mathrm{UT}_n$.

In particular, the following results are valid, and the arguments are standard (see \cite{BahGR98}):
$$
c_m(\mathrm{UT}_n)\le c_m(\mathrm{UT}_n,\Gamma)\le c_m(\mathrm{UT}_n,\Gamma,\ast)\le c_m(\mathrm{UT}_n,\eta,\ast).
$$
Moreover,
$$
c_m(\mathrm{UT}_n)\le
c_m(\mathrm{UT}_n,\ast)\le c_m(\mathrm{UT}_n,\Gamma,\ast).
$$
Now, combining \cite[Corollary 13]{KY} and \Cref{codimThm}, we obtain:
\begin{Thm}\label{expsecexp}
Let $\mathbb{F}$ be a field of characteristic zero, and $(\mathrm{UT}_n,\Gamma,\ast)$ be $\mathrm{UT}_n$ endowed with a $H$-grading $\Gamma$ and a homogeneous involution $\ast$. Then $\mathrm{exp}(\mathrm{UT}_n,\Gamma,\ast)=n$. Moreover,
$$
\lim_{m\to\infty}\log_m\left(\frac{c_m(\mathrm{UT}_n,\Gamma,\ast)}{\mathrm{exp}(\mathrm{UT}_n,\Gamma,\ast)^m}\right)=n-1.
$$\qed
\end{Thm}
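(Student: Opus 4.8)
The plan is to deduce \Cref{expsecexp} from \Cref{codimThm} together with the chain of codimension inequalities established just above the statement, and the general machinery relating exponents and second exponents to monotone sandwiching. First I would recall that \Cref{codimThm} gives, for the fine grading $\eta$, the precise asymptotics $c_m(\mathrm{UT}_n,\eta,\ast)\sim C\,m^{n-1}n^m$ with $C=2^{\lfloor (n-1)/2\rfloor}/n^{n-1}$, and in particular $\exp(\mathrm{UT}_n,\eta,\ast)=n$ and $\log_m\bigl(c_m(\mathrm{UT}_n,\eta,\ast)/n^m\bigr)\to n-1$. On the other end of the sandwich, the classical (ungraded, involution-free) codimension growth of $\mathrm{UT}_n$ is $c_m(\mathrm{UT}_n)\sim c\,m^{?}n^m$ — more precisely, $\exp(\mathrm{UT}_n)=n$ and the polynomial part of the growth of $\mathrm{UT}_n$ has the same degree $n-1$; this is exactly the content invoked through \cite[Corollary 13]{KY}, which computes the "second exponent" (the degree of the polynomial correction) for $\mathrm{UT}_n$ and yields $n-1$.

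The key steps, in order, are: (1) Quote the inequality chain $c_m(\mathrm{UT}_n)\le c_m(\mathrm{UT}_n,\Gamma)\le c_m(\mathrm{UT}_n,\Gamma,\ast)\le c_m(\mathrm{UT}_n,\eta,\ast)$, valid because each step is a coarsening/forgetting of structure and the relevant $T$-ideals only grow, so the multilinear quotients only shrink; the arguments are standard as in \cite{BahGR98}, and the map $\psi\colon G\to H$ exhibits $\Gamma$ as a coarsening of $\eta$. (2) Take $m$-th roots and let $m\to\infty$: the left end gives $\exp(\mathrm{UT}_n)=n$ and the right end gives $\exp(\mathrm{UT}_n,\eta,\ast)=n$ by \Cref{codimThm}, so by the squeeze $\exp(\mathrm{UT}_n,\Gamma,\ast)=n$, which in particular shows the limit defining the $(H,\ast)$-exponent exists. (3) For the second exponent, divide through by $n^m=\exp^m$ and apply $\log_m$: \Cref{codimThm} gives $\log_m\bigl(c_m(\mathrm{UT}_n,\eta,\ast)/n^m\bigr)\to n-1$, while \cite[Corollary 13]{KY} gives $\log_m\bigl(c_m(\mathrm{UT}_n)/n^m\bigr)\to n-1$; since $\log_m$ is monotone and the middle term $c_m(\mathrm{UT}_n,\Gamma,\ast)$ is sandwiched between the two, a second squeeze yields $\log_m\bigl(c_m(\mathrm{UT}_n,\Gamma,\ast)/n^m\bigr)\to n-1$, which is exactly the displayed formula.

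The main (and essentially only) obstacle is a bookkeeping one: making sure the inequality $c_m(\mathrm{UT}_n,\Gamma)\le c_m(\mathrm{UT}_n,\Gamma,\ast)$ is legitimate, i.e. that passing from graded identities to graded identities with involution can only increase the multilinear codimensions. This holds because every $(G)$-graded multilinear space $P_\mu$ embeds into the larger $(G,\ast)$-graded space $P^{(\ast)}_\mu$ (take all $\ast'$ to be "nothing"), and $\mathrm{Id}_G(\mathrm{UT}_n,\Gamma)\subseteq\mathrm{Id}_{G,\ast}(\mathrm{UT}_n,\Gamma,\ast)\cap P_\mu$ under this embedding, so the quotient can only get bigger; similarly $c_m(\mathrm{UT}_n,\ast)\le c_m(\mathrm{UT}_n,\Gamma,\ast)$ follows by regarding an $H$-graded $\ast$-algebra as refining the ungraded $\ast$-structure. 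The other potential subtlety — whether \cite[Corollary 13]{KY} is stated for the bare algebra or in a form directly applicable here — is handled by the sandwich: we only need the polynomial degree $n-1$ to coincide at both ends, and the upper end is supplied entirely by the present paper's \Cref{codimThm}. Everything else is a routine squeeze argument, so no further computation is needed.
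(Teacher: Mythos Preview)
Your proposal is correct and matches the paper's own argument: the theorem is stated with a bare \qed precisely because it follows from the sandwich $c_m(\mathrm{UT}_n)\le c_m(\mathrm{UT}_n,\Gamma,\ast)\le c_m(\mathrm{UT}_n,\eta,\ast)$, with the lower end controlled by \cite[Corollary 13]{KY} and the upper end by \Cref{codimThm}, exactly as you outline. Your discussion of the bookkeeping issue (why adding the involution can only enlarge codimensions) is the only thing you add beyond what the paper makes explicit, and it is handled correctly.
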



\begin{thebibliography}{XX}
\bibitem{BahGR98} Y.~Bahturin, A.~Giambruno, D.~Riley, \emph{Group-graded algebras with polynomial identity},  Israel Journal of Mathematics \textbf{104} (1998), 145--155.
\bibitem{BSZ} Y.~Bahturin, I.~P.~Shestakov, and M.~V.~Zaicev, \emph{Gradings on simple Jordan and Lie algebras}, Journal of Algebra \textbf{283} (2005), 849--868.
\bibitem{BY} Y.~Bahturin, F.~Yasumura, \emph{Distinguishing simple algebras by means of polynomial identities}, S\~ao Paulo Journal of Mathematical Sciences \textbf{13}(1) (2019), 39--72.
\bibitem{BY2} Y.~Bahturin, F.~Yasumura, \emph{Graded polynomial identities as identities of Universal algebras}, Linear Algebra and its Applications \textbf{562} (2019), 1--14.
\bibitem{BSOV21} D.~Bessades, R.~B.~dos Santos, M.~L.~Oliveira, A.~Vieira, \emph{Superalgebras and algebras with involution: Classifying varieties of quadratic growth}. Communications in Algebra \textbf{49} (2021), 2476--2490.
\bibitem{CMa} L.~Centrone, F.~Martino, \emph{A note on cocharacter sequence of Jordan upper triangular matrix algebra}, Communications in Algebra \textbf{45} (2017), 1687–-1695.
\bibitem{CMaS} L.~Centrone, F.~Martino, M.~Souza, \emph{Specht property for some varieties of Jordan algebras of almost polynomial growth}, Journal of Algebra \textbf{521} (2019), 137--165.
\bibitem{CorK} D.~Correa, P.~Koshlukov, \emph{Specht property of varieties of graded Lie algebras}, Monatshefte fur Mathematik \textbf{202} (2023), 65-–92.
\bibitem{CISV21} W.~Costa, A.~Ioppolo, R.~B.~dos Santos, A.~Vieira, \emph{Unitary superalgebras with graded involution or superinvolution}, Journal of Pure and Applied Algebra \textbf{225} (2021).
\bibitem{DiogoGaldino} D.~Diniz, A.~Ramos, J.~Galdino, Graded identities with involution for the algebra of upper triangular matrices, arXiv preprint, arXiv:2006.08452v3 (2023).
\bibitem{VKS} O.~Di Vincenzo, P.~Koshlukov, R.~La Scala, \emph{Involutions for upper triangular matrix algebras}, Advances in Applied Mathematics \textbf{37} (2006), 541--568.
\bibitem{VinKoVa2004} O.~Di Vincenzo, P.~Koshlukov, A.~Valenti, \textit{Gradings on the algebra of upper triangular matrices and their graded identities}, Journal of Algebra \textbf{275} (2004), 550--566.
\bibitem{EK2013} A.~Elduque, M.~Kochetov, \emph{Gradings on simple Lie algebras}. Providence: American Mathematical Society (2013).
\bibitem{FDGY} C.~Fidelis, D.~Diniz, D.~Gon\c calves, F.~Yasumura, \emph{Graded involutions on block-triangular matrix algebras}, Linear Algebra and its Applications \textbf{585} (2020), 24--44.
\bibitem{MF} L.~F.~Fonseca, T.~de Mello, \emph{Degree-inverting involutions on matrix algebras}, Linear and Multilinear Algebra \textbf{66} (2018), 1104--1120.
\bibitem{MFb} L.~F.~Fonseca, T.~de Mello, \emph{Graded polynomial identities for matrices with the transpose involution over an infinite field}, Communications in Algebra \textbf{46} (2018), 1630--1640.
\bibitem{FSV16} L.~F.~Fonseca, R.~B.~dos Santos, A.~Vieira, \emph{Characterizations of $\ast$-superalgebras of polynomial growth}, Linear and Multilinear Algebra \textbf{64} (2016), 1379--1389.
\bibitem{FSY} L.~S.~Fonseca, E.~Santulo Jr., F.~Yasumura, \emph{Degree-inverting involution on full square and triangular matrices}, Linear and Multilinear Algebra \textbf{70} (2022), 1980--1994.
\bibitem{GIM19} A.~Giambruno, A.~Ioppolo, D.~La Mattina, \emph{Superalgebras with involution or superinvolution and almost polynomial growth of the codimensions}. Algebra and Representation Theory \textbf{22} (2019), 961--976.
\bibitem{GSV16} A.~Giambruno, R.~B.~dos Santos, A.~Vieira, \emph{Identities of $\ast$-superalgebras and almost polynomial growth}, Linear and Multilinear Algebra \textbf{64} (2016), 484--501.
\bibitem{GSV20} T.~Gouveia, R.~B.~dos Santos, A.~Vieira, \emph{Minimal $\ast$-varieties and minimal supervarieties of polynomial growth}, Journal of Algebra \textbf{552} (2020), 107--133.
\bibitem{GiZabook} A. Giambruno, M. Zaicev, Polynomial identities and asymptotic methods, Math. Surveys and Monographs 122, AMS, Providence, RI, 2005.
\bibitem{DimasSa} D.~Gon\c calves, M.~Salom\~ao, \emph{Graded polynomial identities for the Jordan algebra of $2\times2$ upper triangular matrices}, arXiv:2011.11116
\bibitem{GSK} D.~Gon\c calves, M.~Salom\~ao, P.~Koshlukov, \emph{Polynomial identities for the Jordan algebra of $2\times2$ upper triangular matrices}, Journal of Algebra \textbf{593} (2022), 477--506.
\bibitem{GRiva} D.~Gon\c calves, E.~Riva, \emph{Graded polynomial identities for the upper triangular matrix algebra over a finite field}, Journal of Algebra \textbf{559} (2020), 625--645.
\bibitem{HN} D.~Haile, M.~Natapov, \emph{Graded polynomial identities for matrices with the transpose involution}, Journal of Algebra \textbf{464} (2016), 175--197.
\bibitem{Haz} R.~Hazrat, \emph{Graded Rings and Graded Grothendieck Groups}. Cambridge: Cambridge University Press (2016).
\bibitem{I20} A.~Ioppolo, \emph{Some results concerning the multiplicities of cocharacters of superalgebras with graded involution}, Linear Algebra and its Applications \textbf{594} (2020), 51--70.
\bibitem{IM17} A.~Ioppolo, D.~La Mattina, \emph{Polynomial codimension growth of algebras with involution and superinvolution}, Journal of Algebra \textbf{472} (2017), 519--545.
\bibitem{ISSV} A.~Ioppolo, R.~B.~dos Santos, M.~L.~Santos, A.~Vieira, \emph{Superalgebras with graded involution: classifying minimal varieties of quadratic growth}, Linear Algebra and its Applications \textbf{621} (2021), 105--134.
\bibitem{KMa} P.~Koshlukov, F.~Martino, \emph{Polynomial identities for the Jordan algebra of upper triangular matrices of order 2}, Journal of Pure and Applied Algebra \textbf{216(11)} (2012), 2524--2532.
\bibitem{KY} P.~Koshlukov, F.~Yasumura, \emph{Asymptotics of graded codimension of upper triangular matrices}, Israel Journal of Mathematics \textbf{223} (2018), 423--439.
\bibitem{Mello} T.~de Mello, \emph{Homogeneous involutions on upper triangular matrices}, Archiv der Mathematik \textbf{118} (2022), 365--374.
\bibitem{PdrManu} P.~Morais, M.~Souza, \emph{The algebra of $2 \times 2$ upper triangular matrices as a commutative algebra: Gradings, graded polynomial identities and Specht property}, Journal of Algebra \textbf{593} (2022).
\bibitem{OSV23} L.~M.~C.~Oliveira, R.~B.~dos Santos, A.~Vieira, \emph{Varieties of group graded algebras with graded involution of almost polynomial growth}, Algebra and Representation Theory \textbf{26} (2023), 663--677.
\bibitem{OSV} M.~A.~Oliveira, R.~B.~dos Santos, A.~Vieira, \emph{Polynomial growth of the codimensions sequence of algebras with group graded involution}, Israel Journal of Mathematics (to appear).
\bibitem{OV21} M.~A.~Oliveira, A.~Vieira, \emph{Varieties of unitary algebras with small growth of codimensions}, International Journal of Algebra and Computation \textbf{31} (2021), 1--21.
\bibitem{UrureCouto} R.~I.~Quispe Urure, D.~C. ~ da Silva, \emph{Involutions of the second kind for upper triangular matrix algebras}, Communications in Algebra \textbf{51} (2023), 2326--2333.
\bibitem{Razmyslov}  I.~P.~Razmyslov, \emph{Identities of algebras and their representations}, No. 138. American Mathematical Society (1994).
\bibitem{UruG} R.~Urure, D.~Gon\c alves, \emph{Identities with involution for $2\times2$ upper triangular matrices algebra over a finite field}, Linear Algebra and its Applications \textbf{544} (2018), 223--253.
\bibitem{VaZa2007} A.~Valenti, M.~Zaicev, \textit{Group gradings on upper triangular matrices}, Archiv der Mathematik \textbf{89} (2007), 33--40.
\bibitem{VaZa2007b} A.~Valenti, M.~Zaicev, \emph{Graded involutions on upper-triangular matrix algebras}, Algebra Colloquium (2009).
\bibitem{Y23} F.~Yasumura, \emph{Graded polynomial identities for the Lie algebra of upper triangular matrices of order 3}. Communications in Algebra \textbf{51} (2023), 2293--230.
\bibitem{Y} F.~Yasumura, \emph{Homogeneous involutions on graded division algebras and their polynomial identities}, Journal of Algebra and its Applications (to appear).

\end{thebibliography}
\end{document}